
\documentclass[preprint,10pt]{elsarticle}



\setlength{\textwidth}{430pt} \setlength{\oddsidemargin}{18pt}
\setlength{\evensidemargin}{18pt} \setlength{\topmargin}{-18pt}
\setlength{\textheight}{660pt} \topmargin 0pt \textheight 23cm
\usepackage{syntonly}

\newtheorem{theorem}{Theorem}

\newtheorem{corollary}{Corollary}

\newtheorem{remark}{Remark}
\newenvironment{proof}{{\bf Proof:}}{\hfill$\bull$\medskip}
%
%

%
%

%
%

\newcommand{\bull}{\vrule height 1.8ex width 1.0ex depth 0ex}

\def\XXint#1#2#3{{\setbox0=\hbox{$#1{#2#3}{\int}$ }
\vcenter{\hbox{$#2#3$ }}\kern-.5\wd0}}

\usepackage{graphicx}


\usepackage{threeparttable}
\usepackage{multirow}
\usepackage{amssymb}
\usepackage{latexsym,euscript}
\usepackage{amsmath,amssymb}
\usepackage{amsfonts}
\usepackage{booktabs}
\usepackage{graphicx}
\usepackage{leftidx}

   \def\XXint#1#2#3{{\setbox0=\hbox{$#1{#2#3}{\int}$}
   \vcenter{\hbox{$#2#3$}}\kern-.5\wd0}}

 \def\I{\mathrm{i}}

 \def\I{\mathrm{i}}

\newcommand{\E}{\mathrm{e}}
\newcommand{\D}{\,\mathrm{d}}








\journal{journal}

\begin{document}

\begin{frontmatter}



\title{\textbf{On the numerical quadrature of weakly singular oscillatory integral
and its fast implementation}\tnoteref{label1}}

\author{Zhenhua Xu}
\ead{xuzhenhua19860536@163.com}

\address{College of Mathematics and Information Science, Zhengzhou University of Light Industry, Zhengzhou, Henan 450002, China.}

\begin{abstract}
 In this paper, we present a Clenshaw--Curtis--Filon--type method for the
  weakly singular oscillatory integral with Fourier and Hankel kernels. By interpolating the
  non-oscillatory and nonsingular part of the integrand at $(N+1)$ Clenshaw--Curtis points, the method can
  be implemented in $O(N\log N)$ operations. The method requires the accurate computation of
  modified moments. We first give a method for the derivation of the recurrence relation for the modified moments,
  which can be applied to the derivation of the recurrence relation for the modified moments corresponding to other type
  oscillatory integrals. By using  recurrence relation, special functions and classic quadrature methods, the modified moments
  can be computed accurately and efficiently.
  Then, we present the corresponding error bound in inverse powers of frequencies $k$ and $\omega$
  for the proposed method.  Numerical examples are provided to support the theoretical results and show the efficiency and accuracy of the method.

\end{abstract}

\begin{keyword}
Weakly singular oscillatory integral \sep Clenshaw--Curtis--Filon--type method  \sep Modified moments \sep Recurrence relation \sep Error bound.

\MSC 65D32 \sep 65D30

\end{keyword}

\end{frontmatter}


\section{\textbf{Introduction}}\label{sec:intro}
\hspace{0.1cm}
In this work we consider the evaluation of the weakly singular oscillatory integral
of the form
\begin{eqnarray}\label{eq:a1}
I[f]=\int_0^1 f(x) x^\alpha (1-x)^\beta \E^{\I2kx} H_\nu^{(1)}(\omega x)\D x
\end{eqnarray}
where $\alpha-|\nu|>-1, \beta>-1$, and $k\gg1, \omega\gg1$,   $H^{(1)}_{\nu}( x)=J_{\nu
}(x)+\I Y_{\nu }(x)$ is Hankel function of the first kind of order
$\nu$, and $f$ is a sufficiently smooth function on $[0, 1]$.
In many areas
of science and engineering, for example, in astronomy, optics, quantum mechanics, seismology image processing, electromagnetic
scattering (\cite{Arden}, \cite{Arfken}, \cite{Bao}, \cite{Davis}, \cite{Huybrechs1}), one will come across the computation
of the integral \eqref{eq:a1}.

The integral \eqref{eq:a1} has the following two characteristics:
\begin{enumerate}
\item
When $k+\omega\gg 1$, the integrand becomes highly oscillatory. Consequently, a prohibitively number of quadrature nodes are needed to
obtain satisfied accuracy if one uses classical numerical methods like Simpson rule, Gaussian quadrature, etc.
Moreover, it presents serious
difficulties in obtaining numerical convergence of the integration.

\item The function $H_\nu^{(1)}(x)$ has a  logarithmic singularity for $\nu=0$, and algebraic singularity for $\nu\neq0$ at the point $x$=0.
In addition, if $-1<\alpha, \beta <0$, the integrand also has algebraic singularities at two endpoints, which impacts heavily on its quadrature and its
error bound. For a special case that $\alpha=0, \beta=0$, and $k=0$, the integral can be rewritten in a special form
\begin{equation}\label{eq:a2}
I[f]=\int_0^1 f(x) H_\nu^{(1)}(\omega x)\D x.
\end{equation}
\end{enumerate}

In the last few years, many efficient numerical methods has been devised for the evaluation of oscillatory
integrals. Here, we only mention several main methods,  such as Levin method and Levin-type method \cite{Levin1, Levin2, Olver1}, generalized quadrature rule \cite{Evans1, Evans2},
Filon method and Filon--type method \cite{Dominguez1, Dominguez2, Filon, Iserles, Xiang, Xiang1, Xiang2}, Gauss--Laguerre quadrature \cite{Chen, Chen2, Chen3, Huybrechs1, Huybrechs2, Iserles, Xu1}.
In what follows, we will introduce several other papers related to the integrals considered in this paper.
For the integral $\int_0^1 f(x)x^\alpha (1-x)^\beta \E^{\I kx}\D x$, as early as in 1992, Piessens \cite{Piessens} construct
a fast algorithm to approximate it by truncating $f$ by its Chebyshev series and using the recurrence relation of the
modified moments.
Recently, the references \cite{Kang1, Kang2} developed this method by using a special Hermite interpolation at
Clenshaw--Crutis points and Chebyshev expansion for $\E^{\I kx}$.
If $f$ is analytic  in a sufficiently large
complex region containing $[0,1]$, a numerical steepest
descent method \cite {Kang3} was presented by using complex integration theory. The same idea is also applied
to the the computation of the integral $\int_a^b(x-a)^\alpha(b-x)^\beta\ln(x-a)f(x) \E^{\I\omega x}\D x, \alpha, \beta>-1$, based
on  construction of the Gauss quadrature rule with logarithmic weight function \cite{He}.
For the integral $\int_0^1 f(x)x^\alpha (1-x)^\beta J_m(\omega x)\D x, \alpha, \beta>-1$, a Filon-type method based on
a special Hermite interpolation polynomial at Clenshaw--Curtis points was introduced in \cite{Chen4}. On the
other hand, the reference \cite{Kang4} proposed a Clenshaw--Curtis--Filon method for the computation of the oscillatory Bessel integral
$\int_0^1 f(x)x^\alpha \ln (x)(1-x)^\beta J_m(\omega x)\D x, \alpha, \beta>-1$, with algebraic or logarithmic singularities at the two endpoints.

For the evaluation of the integral \eqref{eq:a1}, the literature \cite{He} transformed it into two line integrals by using the analytic continuation
and the construction of Gauss quadrature rules. However, this method require that $f$ is analytic in a enough large region.
A recent work \cite{Xu2} present a Clenshaw--Curtis--Filon--type method for the special case $\alpha=\beta=k=0$ by
using special functions. In addition, a composite method \cite{Dominguez2} can also be applied to the computation of
this integral for this case, by absorbing the non-oscillatory part of Hankel function into $f$, then interpolating
its product with $f$.  However, as the author in \cite{Xu2} pointed out that the accuracy of this method may becomes worse as the number of Clenshaw--Curtis points
increases and
the fastest convergence of this method obtained is $O(\omega^{-2})$ for fixed number
of Clenshaw--Curtis points.

In view of the advantages of
Clenshaw--Curtis--Filon method, in this paper we will consider a higher order Clenshaw--Curtis--Filon--type method
for the integral \eqref{eq:a1}, which does not require that $f$ is analytic in a enough large region.
As we know, the fast implementation of Clenshaw--Curtis--Filon method largely depends on the
accurate and efficient computation of modified moments.
In addition, the key problem of the efficient computation of the modified moments is
the how to obtain the recurrence relation for them.
Fortunately, we can give a universal method  for the derivation of the recurrence relation for the modified moments.
Moreover, this method can be applied to the modified moments with other type kernels.

The outline of this paper is organized as follows. In Section \ref{sec:2}, we describe the Clenshaw--Curtis--Filon--type
for the integral \eqref{eq:a1}, and present a universal method  for the derivation of the recurrence relation for the modified moments,
by which the modified moments can be efficiently computed with several initial values.
In Section \ref{sec:3}, we give an error bound on $k$ and $\omega$ for the presented method.
Some examples are given in Section \ref{sec:4} to show the efficiency and accuracy.
Finally, we finish this paper in Section \ref{sec:5} by presenting some concluding remarks.

\section{\textbf{Clenshaw--Curtis--Filon--type method and its  implementation}}\label{sec:2}
\setcounter{theorem}{0} \setcounter{equation}{0}
\setcounter{lemma}{0} \setcounter{proposition}{0}
\setcounter{corollary}{0}

In what follows we will consider a Clenshaw--Curtis--Filon--type method for the integrals \eqref{eq:a1}
and its fast implementation. Suppose that $f$ is a sufficiently  function on $[0,1]$, and let
$P_{N+2s}(x)$ denote the Hermite  interpolation polynomial at the Clenshaw--Curtis points
$$x_j=\big(1+\cos(j\pi/N)\big)/2, \ j=0,\ldots,N,$$
where $s$ is a nonnegative integer, and for $\ell=0,\ldots,s$, there holds
\begin{eqnarray}\label{eq:b1}
P_{N+2s}^{(\ell)}(0)=f^{(\ell)}(0),\textrm{\quad}P_{N+2s}(x_j)=f(x_j),\textrm{\quad}P_{N+2s}^{(\ell)}(1)=f^{(\ell)}(1),\textrm{\quad}
j=1,\ldots,N-1.
\end{eqnarray}
Then $P_{N+2s}(x)$ can be written in the following form
\begin{eqnarray}\label{eq:b2}
P_{N+2s}(x)=\sum_{n=0}^{N+2s}a_n T^\ast_n(x),
\end{eqnarray}
where $a_n$ can be fast calculated by fast Fourier transform \cite{Xiang} with $O(N\log N)$ operations, $T^\ast_n(x)$ is the shifted Chebyshev polynomial of the first kind
of degree $n$
on $[0,1]$.

In view of \eqref{eq:b1} and \eqref{eq:b2}, we can define \textbf{Clenshaw--Curtis--Filon--type method }
for the integral \eqref{eq:a1} by
\begin{equation}\label{eq:b3}
Q_{N,s}^{CCF}[f]=\int_0^1P_{N+2s}(x)x^\alpha(1-x)^\beta \E^{\I2kx}H_\nu^{(1)}(\omega x)\D x=\sum_{n=0}^{N+2s}a_nM(n,k,\omega),
\end{equation}
where the modified moments
\begin{equation}\label{eq:b4}
M(n,k,\omega)=\int_0^1x^\alpha(1-x)^\beta T^\ast_n(x) \E^{\I2kx}H_\nu^{(1)}(\omega x)\D x
\end{equation}
have to be computed accurately.
\subsection{\textbf{Recurrence relation for the modified moments}}\label{sec:2.1}
As we have stated in Section \ref{sec:intro}, the key problem of the fast computations of the modified moment $M(n,k,\omega)$
is to obtain a recurrence relation for them. In the following, we will give a universal method for the derivation
of the recurrence relation for the modified moments.

\begin{theorem}\label{th:1}
The modified moments $M(n,k,\omega)$ for $n\geq 4, k\geq 0, \omega>0$ satisfy the following recurrence relation:
\begin{eqnarray}\label{eq:b5}
\nonumber &&\Big(\frac{1}{16}\omega^2-\frac{1}{4}k^2\Big)M(n+4,k,\omega)+f_1(n,\alpha,\beta)M(n+3,k,\omega)+f_2(n,\alpha,\beta)M(n+2,k,\omega)\\
\nonumber &&+f_3(n,\alpha,\beta)M(n+1,k,\omega)+f_4(n,\alpha,\beta)M(n,k,\omega)+f_3(-n,\alpha,\beta)M(n-1,k,\omega)\\
\nonumber &&+f_2(-n,\alpha,\beta)M(n-2,k,\omega)+f_1(-n,\alpha,\beta)M(n-3,k,\omega)\\
&&+\Big(\frac{1}{16}\omega^2-\frac{1}{4}k^2\Big)M(n-4,k,\omega)=0,
\end{eqnarray}
where
\begin{align}
 \label{eq:b6} f_1(n,\alpha,\beta)=&\I k(\alpha+\beta+n+4)-\frac{1}{2}\I k, \\
\nonumber f_2(n,\alpha,\beta)=&9+6(\alpha+\beta+n)+k^2+n^2+\alpha^2+\beta^2-\frac{1}{4}\omega^2-\nu^2\\
\label{eq:b7}&+2(\alpha\beta+\alpha n+\beta n)+\I k(1-2\alpha+2\beta),\\
\nonumber f_3(n,\alpha,\beta)=&2n-8\alpha+12\beta+4(1-\I \alpha k-\I \beta k+\nu^2+\beta n-\alpha n)\\
\label{eq:b8}&-\frac{31}{2}\I k+3\I k(\alpha+\beta-n+4)+4(\beta^2-\alpha^2),\\
\nonumber f_4(n,\alpha,\beta)=&6+4\alpha+12\beta-4\alpha\beta-2\I k+4\I k(\alpha-\beta)\\
\label{eq:b9}&+\frac{3}{8}\omega^2-\frac{3}{2}k^2+6(\alpha^2+\beta^2-\nu^2)-2n^2.
\end{align}
\end{theorem}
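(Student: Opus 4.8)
The plan is to derive the recurrence by exploiting two facts: first, the shifted Chebyshev polynomials $T^\ast_n(x)$ satisfy a recurrence that raises and lowers the index $n$; and second, the weight $x^\alpha(1-x)^\beta \E^{\I 2kx} H^{(1)}_\nu(\omega x)$ satisfies a linear second–order ODE (because $H^{(1)}_\nu(\omega x)$ satisfies Bessel's equation and the algebraic/exponential factors contribute rational logarithmic–derivative perturbations). Concretely, write $w(x)=x^\alpha(1-x)^\beta \E^{\I 2kx}$ and $g(x)=H^{(1)}_\nu(\omega x)$, so the integrand is $T^\ast_n(x)\,w(x)\,g(x)$. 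Bessel's equation gives $x^2 g'' + x g' + (\omega^2 x^2-\nu^2) g = 0$. The idea is to start from the trivially true identity
\begin{equation}\label{eq:plan1}
\int_0^1 \frac{d}{dx}\Big[ x^2(1-x)^2\, p_n(x)\, w(x)\, g'(x) - \big(x^2(1-x)^2 p_n(x) w(x)\big)' g(x) + \cdots \Big]\D x = \text{boundary terms},
\end{equation}
i.e.\ to integrate by parts a suitable bilinear concomitant built from $g$, $g'$ and a polynomial multiple of $T^\ast_n w$, chosen so that the factor $x^2(1-x)^2$ kills the endpoint contributions (using $\alpha-|\nu|>-1$, $\beta>-1$). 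After using Bessel's equation to eliminate $g''$, the integrand collapses to $g(x)$ times a differential expression applied to $x^2(1-x)^2 T^\ast_n(x) w(x)$; carrying out the differentiation and dividing out $w$ leaves $g(x) w(x)$ times a polynomial in $x$ whose coefficients are linear in $T^\ast_n$ and its derivatives.

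The second ingredient is to convert all of $x^j T^{\ast(\ell)}_n(x)$ appearing there into a linear combination of plain $T^\ast_m(x)$ with $|m-n|\le 4$. This uses the standard three–term recurrence $2(2x-1)T^\ast_n(x) = T^\ast_{n+1}(x)+T^\ast_{n-1}(x)$ applied repeatedly to handle multiplication by powers of $x$ (equivalently of $2x-1$), together with the differentiation formula expressing $T^{\ast\prime}_n$ and $T^{\ast\prime\prime}_n$ in terms of $\{T^\ast_m\}$, or — more cleanly — the identity $(1-(2x-1)^2)T^{\ast\prime}_n(x) = \tfrac{n}{2}\big(T^\ast_{n-1}(x)-T^\ast_{n+1}(x)\big)$ and its second–order analogue, so that the factor $x^2(1-x)^2 = \tfrac{1}{16}(1-(2x-1)^2)^2$ pairs naturally with the derivatives. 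Because the differential operator is second order and we multiplied by a degree–4 polynomial, the highest and lowest indices produced are $n\pm4$, and their coefficient is exactly the leading symbol $\tfrac14\big(\tfrac14\omega^2-k^2\big) = \tfrac{1}{16}\omega^2-\tfrac14 k^2$, matching \eqref{eq:b5}. Integrating the resulting finite linear combination $\sum_{|m-n|\le4} c_m(n,\alpha,\beta,k,\omega) T^\ast_m(x) w(x) g(x)$ over $[0,1]$ then yields $\sum c_m M(m,k,\omega)=0$, which is the claimed relation; reading off the $c_m$ gives \eqref{eq:b6}–\eqref{eq:b9}.

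The main obstacle is bookkeeping rather than conceptual: one must (i) choose the bilinear concomitant so that \emph{all} boundary terms vanish for the stated parameter ranges — this forces the $x^2(1-x)^2$ prefactor and a specific first–order term in $g'$ versus $g$, and it is the one genuinely delicate point since $\alpha$ or $\beta$ may be negative; and (ii) organize the expansion of $x^2(1-x)^2\,\mathcal L[T^\ast_n w]$ into the Chebyshev basis without error. I would carry this out by working in the variable $t=2x-1\in[-1,1]$, where $w$ becomes $\big(\tfrac{1-t}{2}\big)^\alpha\big(\tfrac{1+t}{2}\big)^\beta \E^{\I k(t+1)}$ with logarithmic derivative $\tfrac{w'}{w}=\tfrac{-\alpha}{1-t}+\tfrac{\beta}{1+t}+\I k$ (up to the chain–rule factor), clearing denominators by the $(1-t^2)^2$ factor, and then using only $2tT^\ast_n=T^\ast_{n+1}+T^\ast_{n-1}$ and $(1-t^2)T^{\ast\prime}_n=\tfrac n2(T^\ast_{n-1}-T^\ast_{n+1})$ repeatedly. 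The symmetry visible in \eqref{eq:b5} — the coefficient of $M(n-j)$ being $f_j(-n,\alpha,\beta)$ — should emerge automatically and serves as a strong consistency check on the algebra; I would verify the final coefficients independently by a symbolic computation and by testing the recurrence numerically against directly computed moments for small $n$.
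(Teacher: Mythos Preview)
Your proposal is correct and matches the paper's approach: the paper likewise passes to $t=2x-1$, multiplies Bessel's equation for $H_\nu^{(1)}\big(\tfrac{1+t}{2}\omega\big)$ by $(1-t)^2$ times the weight $(1+t)^\alpha(1-t)^\beta\E^{\I kt}T_n(t)$, integrates by parts to shift the derivatives off $H_\nu^{(1)}$ (the resulting $(1-t)^2(1+t)^2$ factor --- your $x^2(1-x)^2$ --- kills the boundary terms), and then expands via $x^mT_n=2^{-m}\sum_j\binom{m}{j}T_{n+m-2j}$ and $(1-t^2)T_n'=\tfrac{n}{2}(T_{n-1}-T_{n+1})$. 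Your bilinear-concomitant language is just a tidier packaging of the same integration-by-parts manoeuvre.
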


\begin{proof}
First, we can rewrite the modified moments $M(n,k,\omega)$ by
\begin{eqnarray}\label{eq:b10}
M(n,k,\omega)=\frac{1}{2^{\alpha+\beta+1}} \E^{\I k}\int_{-1}^1 (1+x)^\alpha (1-x)^\beta T_n(x)\E^{\I kx}H_\nu^{(1)}\big(\frac{1+x}{2} \omega\big)\D x,
\end{eqnarray}
where $T_n(x)$ is the Chebyshev polynomial of degree $n$ of the first kind.

Form the above equality, we can see that the modified  moments $M(n,k,\omega)$ and the integral $\int_{-1}^1 (1+x)^\alpha (1-x)^\beta T_n(x)\E^{\I kx}H_\nu^{(1)}\big(\frac{1+x}{2} \omega\big)\D x$ have the same recurrence relation.

Since the function $y=H_\nu^{(1)}(x)$ satisfies the following Bessel's differential equation \cite[p. 358] {Abram}
\begin{equation}\label{eq:b11}
  x^2\frac{d^2y}{\D x^2}+x\frac{dy}{\D x}+(x^2-\nu^2)=0,
\end{equation}
we have
\begin{eqnarray}\label{eq:b12}
\nonumber&&(1+x)^2\Big[H_\nu^{(1)}\big(\frac{1+x}{2}\omega\big)\Big]^{\prime\prime}+(1+x)\Big[H_\nu^{(1)}\big(\frac{1+x}{2}\omega\big)\Big]^{\prime}\\
&&-\Big(\nu^2-\frac{(1+x)^2\omega^2}{4}\Big)H_\nu^{(1)}\big(\frac{1+x}{2}\omega\big)=0.
\end{eqnarray}
Let
\begin{eqnarray}\label{eq:b13}
K_1=4\int_{-1}^1(1+x)^\alpha(1-x)^\beta(1-x)^2(1+x)^2\E^{\I kx}\Big[H_\nu^{(1)}\big(\frac{1+x}{2}\omega\big)\Big]^{\prime\prime}T_n(x)\D x,
\end{eqnarray}
\begin{eqnarray}\label{eq:b14}
K_2=4\int_{-1}^1(1+x)^\alpha(1-x)^\beta(1-x)^2(1+x)\E^{\I kx}\Big[H_\nu^{(1)}\big(\frac{1+x}{2}\omega\big)\Big]^{\prime}T_n(x)\D x,
\end{eqnarray}
and
\begin{eqnarray}\label{eq:b15}
 K_3=4\int_{-1}^1(1+x)^\alpha(1-x)^\beta(1-x)^2\Big(\nu^2-\frac{(1+x)^2\omega^2}{4}\Big)
\E^{\I kx}H_\nu^{(1)}\big(\frac{1+x}{2}\omega\big)T_n(x)\D x.
\end{eqnarray}
It follows from \eqref{eq:b12} that
\begin{equation}\label{eq:b16}
  K_1+K_2-K3=0.
\end{equation}

Noting that the integrands in $K_1$ and $K_2$ have the common factor $(1-x)^2$ and using integration by
parts, we can easily get
\begin{eqnarray}\label{eq:b17}
K_1=4\int_{-1}^1\big[(1+x)^\alpha(1-x)^\beta(1-x)^2(1+x)^2\E^{\I kx}T_n(x)\big]^{\prime\prime}H_\nu^{(1)}\big(\frac{1+x}{2}\omega\big)\D x,
\end{eqnarray}
\begin{eqnarray}\label{eq:b18}
K_2=4\int_{-1}^1\big[(1+x)^\alpha(1-x)^\beta(1-x)^2(1+x)\E^{\I kx}T_n(x)\big]^{\prime}H_\nu^{(1)}\big(\frac{1+x}{2}\omega\big)\D x.
\end{eqnarray}

According to the properties of the Chebyshev polynomial of the first kind\cite{Mason}
\begin{eqnarray}\nonumber
x^mT_n(x)=2^{-m}\sum_{j=0}^m\binom{m}{j}T_{n+m-2j}(x),
\end{eqnarray}
and
\begin{equation*}
\frac{d}{\D x}T_n(x)=\frac{n}{2}\frac{T_{n-1}(x)-T_{n+1}(x)}{1-x^2},
\end{equation*}
by rewriting the integrands in $K_1, K_2$ and $K_3$ as the sum of the product of Chebyshev
polynomials of different degree and
$(1+x)^\alpha(1-x)^\beta \E^{\I kx}H_\nu^{(1)}\big(\frac{1+x}{2}\omega\big)$, we derive
\begin{eqnarray}\label{eq:b19}
\nonumber K_1&=&-\frac{1}{4}k^2 M(n+4,k,\omega)+f_5(n,\alpha,\beta)M(n+3,k,\omega)+f_6(n,\alpha,\beta)M(n+2,k,\omega)\\
\nonumber &&+f_7(n,\alpha,\beta)M(n+1,k,\omega)+f_8(n,\alpha,\beta)M(n,k,\omega)+f_7(-n,\alpha,\beta)M(n-1,k,\omega)\\
\nonumber &&+f_6(-n,\alpha,\beta)M(n-2,k,\omega)+f_5(-n,\alpha,\beta)M(n-3,k,\omega)\\
&&-\frac{1}{4}k^2M(n-4,k,\omega),
\end{eqnarray}
where
\begin{eqnarray}\label{eq:b20}
f_5(n,\alpha,\beta)&=&\I k(\alpha+\beta+n+4),
\end{eqnarray}
\begin{eqnarray}\label{eq:b21}
\nonumber f_6(n,\alpha,\beta)&=&12+7(\alpha+\beta+n)+k^2+n^2+\alpha^2+\beta^2\\
&&+2(\alpha\beta+\alpha n+\beta n)+2\I k(\beta-\alpha),
\end{eqnarray}
\begin{eqnarray}\label{eq:b22}
\nonumber f_7(n,\alpha,\beta)&=&12(\beta-\alpha)-4\I k(\alpha+\beta)+4(\beta n-\alpha n)\\
&&+3\I k(\alpha+\beta-n+4)+4(\beta^2-\alpha^2),
\end{eqnarray}
\begin{eqnarray}\label{eq:b23}
\nonumber f_8(n,\alpha,\beta)&=&8+10(\alpha+\beta)+4\I k(\alpha-\beta)-2n^2-\frac{3}{2}k^2\\
&&+6(\alpha^2+\beta^2)-4\alpha\beta,
\end{eqnarray}

\begin{eqnarray}\label{eq:b24}
\nonumber K_2&=&-\bigg\{\frac{1}{2}\I kM(n+3,k,\omega)+(\alpha+\beta+n+3-\I k)M(n+2,k,\omega)\\
\nonumber &&-(\frac{1}{2}\I k+4+4\alpha+2n)M(n+1,k,\omega)+(6\alpha-2\beta+2\I k+2)M(n,k,\omega)\\
\nonumber &&-(\frac{1}{2}\I k+4+4\alpha+2n)M(n-1,k,\omega)+(\alpha+\beta-n+3-\I k)M(n-2,k,\omega)\\
 &&+\frac{1}{2}\I kM(n-3,k,\omega)\bigg\},
\end{eqnarray}
\begin{eqnarray}\label{eq:b25}
\nonumber K_3&=&-\frac{1}{16}\bigg\{\omega^2M(n+4,k,\omega)-(4\omega^2+16\nu^2)M(n+2,k,\omega)\\
\nonumber &&+64\nu^2M(n+1,k,\omega)+(6\omega^2-96\nu^2)M(n,k,\omega)+64\nu^2M(n-1,k,\omega)\\
 &&-(4\omega^2+16\nu^2)M(n-2,k,\omega)+\omega^2M(n-4,k,\omega)\bigg\}.
\end{eqnarray}

A combination of the Eqs. \eqref{eq:b16}, \eqref{eq:b19}, \eqref{eq:b24}, \eqref{eq:b25}
leads to recurrence relation \eqref{eq:b5}.
\end{proof}

In the following, let us denote by
\begin{eqnarray}
\label{eq:b26} \widetilde{M}_n^{[1]} &=& \int_0^1 \ln (x)x^\alpha(1-x)^\beta T^\ast_n(x) \E^{\I2kx}H_\nu^{(1)}(\omega x)\D x, \\
\label{eq:b27} \widetilde{M}_n^{[2]} &=& \int_0^1x^\alpha(1-x)^\beta T^\ast_n(x)\ln (1-x) \E^{\I2kx}H_\nu^{(1)}(\omega x)\D x, \\
\label{eq:b28} \widetilde{M}_n^{[3]} &=& \int_0^1\ln (x)x^\alpha(1-x)^\beta \ln (1-x) T^\ast_n(x) \E^{\I2kx}H_\nu^{(1)}(\omega x)\D x,
\end{eqnarray}
respectively, where $\alpha, \beta>-1$. Using the fact that
\begin{eqnarray}
\label{eq:b28b} \widetilde{M}_n^{[1]} &=& \frac{\partial}{\partial \alpha} M(n,k,\omega),\\
\label{eq:b29} \widetilde{M}_n^{[2]} &=& \frac{\partial}{\partial \beta} M(n,k,\omega), \\
\label{eq:b30} \widetilde{M}_n^{[3]} &=& \frac{\partial^2}{\partial\alpha\partial\beta} M(n,k,\omega),
\end{eqnarray}
and according to Theorem \ref{th:1}, we can readily obtain the following result.
\begin{corollary}
The sequences $\widetilde{M}_n^{[\ell]}, \ell=1,2,3$ and $n\geq4, k\geq0, \omega>0$ satisfy the following
ninth-order homogeneous recurrence relations
\begin{eqnarray}\label{eq:31}
 \nonumber &&\big(\frac{1}{16}\omega^2-\frac{1}{4}k^2\big)\widetilde{M}_{n+4}^{[\ell]}+f_1(n,\alpha,\beta)\widetilde{M}_{n+3}+f_2(n,\alpha,\beta)\widetilde{M}_{n+2}^{[\ell]}
+f_3(n,\alpha,\beta)\widetilde{M}_{n+1}^{[\ell]}+\\
\nonumber &&f_4(n,\alpha,\beta)\widetilde{M}_{n}^{[\ell]}+f_3(-n,\alpha,\beta)\widetilde{M}_{n-1}^{[\ell]}+f_2(-n,\alpha,\beta)\widetilde{M}_{n-2}^{[\ell]}+f_1(-n,\alpha,\beta)\widetilde{M}_{n-3}^{[\ell]}\\
&&+\big(\frac{1}{16}\omega^2-\frac{1}{4}k^2\big)\widetilde{M}_{n-4}^{[\ell]}=r_n^{[\ell]},
\end{eqnarray}
where
\begin{eqnarray}\label{eq:b32}
\nonumber r_n^{[1]}&=&-\bigg\{\I kM(n+3,k,\omega)+(6+2\beta+2n+2\alpha+2\I k)M(n+2,k,\omega)-(8+\I k+4n+8\alpha)\\
\nonumber &&M(n+1,k,\omega)+(4-4\beta+4\I k+12\alpha)M(n,k,\omega)-(8+\I k-4n+8\alpha)M(n-1,k,\omega)\\
 &&+(6+2\beta-2n+2\alpha+2\I k)M(n-2,k,\omega)+\I kM(n-3,k,\omega)\bigg\},
\end{eqnarray}
\begin{eqnarray}\label{eq:b33}
\nonumber r_n^{[2]}&=&-\bigg\{\I k M(n+3,k,\omega)+(6+2\beta+2n+2\alpha-2\I k)M(n+2,k,\omega)+(12-\I k+4n+8\beta)\\
\nonumber &&M(n+1,k,\omega)+(12-4\alpha-4\I k+12\beta)M(n,k,\omega)+(12-\I k-4n+8\beta)M(n-1,k,\omega)\\
 &&+(6+2\beta-2n+2\alpha-2\I k)M(n-2,k,\omega)+\I kM(n-3,k,\omega)\bigg\},
\end{eqnarray}
and
\begin{eqnarray}\label{eq:b34}
\nonumber r_n^{[3]}&=&-\bigg\{\I k\big(\widetilde{M}_{n+3}^{[1]}
+\widetilde{M}_{n+3}^{[2]}\big)+(6+2\beta+2n+2\alpha-2\I k)\widetilde{M}_{n+2}^{[1]}+(6+2\beta+2n+2\alpha+2\I k)\widetilde{M}_{n+2}^{[2]}\\
\nonumber &&2M(n+2,k,\omega)+(12-\I k+4n+8\beta)\widetilde{M}_{n+1}^{[1]}+(8+\I k+4n+8\alpha)\widetilde{M}_{n+1}^{[2]}-4M(n,k,\omega)\\
\nonumber  &&+(12-\I k-4n+8\beta)\widetilde{M}_{n-1}^{[1]}+(8+\I k-4n+8\alpha)\widetilde{M}_{n-1}^{[2]}+(6+2\beta-2n+2\alpha-2\I k)\widetilde{M}_{n-2}^{[1]}\\
 &&+(6+2\beta-2n+2\alpha+2\I k)\widetilde{M}_{n-2}^{[2]}+2M(n-2,k,\omega)+\I k\big(\widetilde{M}_{n-3}^{[1]}
+\widetilde{M}_{n-3}^{[2]}\big)\bigg\}.
\end{eqnarray}
\end{corollary}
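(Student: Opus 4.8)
The plan is to derive the three recurrences in \eqref{eq:31} directly from Theorem \ref{th:1} by differentiating the recurrence \eqref{eq:b5} with respect to the parameters $\alpha$ and $\beta$, exploiting the identities \eqref{eq:b28b}--\eqref{eq:b30}. Before doing so I would justify differentiation under the integral sign in \eqref{eq:b4}: near $x=0$ the factor $x^\alpha H_\nu^{(1)}(\omega x)$ is $O(x^{\alpha-|\nu|})$ for $\nu\neq0$ and $O(x^\alpha\log x)$ for $\nu=0$, while near $x=1$ the factor $(1-x)^\beta$ is integrable for $\beta>-1$; hence inserting the extra factor $\ln x$ and/or $\ln(1-x)$ leaves an integrand that is still dominated, locally uniformly in $(\alpha,\beta)$ on the admissible range, by a fixed integrable function. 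This legitimizes \eqref{eq:b28b}--\eqref{eq:b30} as well as the term-by-term differentiation of \eqref{eq:b5}.

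For $\ell=1$ I would apply $\partial/\partial\alpha$ to \eqref{eq:b5}. Since the outer coefficient $\tfrac1{16}\omega^2-\tfrac14 k^2$ is independent of $\alpha$, the product rule turns each term $f_j(\pm n,\alpha,\beta)\,M(n',k,\omega)$ into $f_j(\pm n,\alpha,\beta)\,\partial_\alpha M(n',k,\omega)+\partial_\alpha f_j(\pm n,\alpha,\beta)\,M(n',k,\omega)$. By \eqref{eq:b28b}, the first group reassembles into exactly the homogeneous left-hand side of \eqref{eq:31} with $\ell=1$ and with $\widetilde M^{[1]}$ in place of $M$; moving the second group to the right identifies $r_n^{[1]}$ as the combination $-\sum_j \partial_\alpha f_j(\pm n,\alpha,\beta)\,M(n',k,\omega)$. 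Note that $\partial_\alpha$ annihilates the $M(n\pm4,k,\omega)$ coefficients, so no such terms occur in $r_n^{[1]}$, consistent with \eqref{eq:b32}. The same argument with $\partial/\partial\beta$ produces the $\ell=2$ recurrence, with $r_n^{[2]}=-\sum_j \partial_\beta f_j(\pm n,\alpha,\beta)\,M(n',k,\omega)$ as in \eqref{eq:b33}.

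For $\ell=3$ I would differentiate the $\ell=1$ identity once more, this time with respect to $\beta$ (equivalently, apply $\partial^2/\partial\alpha\partial\beta$ to \eqref{eq:b5}). Using $\partial_\beta\widetilde M_n^{[1]}=\widetilde M_n^{[3]}$ and $\partial_\beta M(n,k,\omega)=\widetilde M_n^{[2]}$, the homogeneous part becomes the left side of \eqref{eq:31} with $\ell=3$, while the inhomogeneity splits into three families: $-\sum_j\partial_\beta f_j\,\widetilde M^{[1]}$, $-\sum_j\partial_\alpha f_j\,\widetilde M^{[2]}$, and $-\sum_j\partial_\alpha\partial_\beta f_j\,M$. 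Here one reads off from \eqref{eq:b6}--\eqref{eq:b9} that $\partial_\alpha\partial_\beta f_1=\partial_\alpha\partial_\beta f_3=0$, $\partial_\alpha\partial_\beta f_2=2$, and $\partial_\alpha\partial_\beta f_4=-4$, which is why only the terms $2M(n\pm2,k,\omega)$ and $-4M(n,k,\omega)$ survive in the $M$-family of \eqref{eq:b34}; identifying the coefficients of the $\widetilde M^{[1]}$ and $\widetilde M^{[2]}$ families with $\partial_\beta f_j(\pm n,\alpha,\beta)$ and $\partial_\alpha f_j(\pm n,\alpha,\beta)$ then completes the proof.

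The only real obstacle is bookkeeping: forming the first and mixed second partial derivatives of $f_1,\dots,f_4$, evaluating them at the reversed index $-n$, and tracking the global sign inside the braces of \eqref{eq:b32}--\eqref{eq:b34} so that the resulting combinations match the displayed right-hand sides. No analytic input beyond Theorem \ref{th:1} and the differentiation-under-the-integral argument is required.
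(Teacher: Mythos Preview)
Your proposal is correct and follows exactly the approach indicated in the paper, which simply states that the corollary follows from Theorem~\ref{th:1} together with the identities \eqref{eq:b28b}--\eqref{eq:b30}. You have supplied the details the paper omits, namely differentiating \eqref{eq:b5} with respect to $\alpha$, $\beta$, and then $\alpha\beta$, and your bookkeeping of the partial derivatives $\partial_\alpha f_j$, $\partial_\beta f_j$, $\partial_\alpha\partial_\beta f_j$ matches the stated right-hand sides.
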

\begin{remark} \label{re:1}
{\rm
The proof of Theorem \ref{th:1} provides a universal method for the derivations of the recurrence relations
of the modified moments, which can be applied to the modified moments with other kernels that satisfy some
linear differential equations. For example, for the derivations of the recurrence relations of the following three
kinds of modified moments
\begin{eqnarray*}
&&\int_0^1x^\alpha(1-x)^\beta T^\ast_n(x) \E^{\I2kx}{\rm Ai}(-\omega x)\D x,\\
&&\int_0^1x^\alpha(1-x)^\beta T^\ast_n(x) \E^{\I2kx}j_{\nu}(\omega x)\D x,\\
&&\int_0^1x^\alpha(1-x)^\beta T^\ast_n(x) \E^{\I2kx}y_{\nu}(\omega x)\D x,
\end{eqnarray*}
the method is applicable, where ${\rm Ai}(x)$ is Airy function, $j_{\nu}(x),y_{\nu}(x)$ are  spherical Bessel functions
of the first kind and second kind\cite{Abram}, respectively.
Moreover,  by differentiating the  recurrence relation  with respect to parameters $\alpha, \beta$, one can also
obtain the recurrence relations for the modified moments with logarithmic singularities at two
endpoints. As this idea is tangential
to the topic of this paper, we will not study it further.
}
\end{remark}

\begin{remark}\label{re:2}
{\rm
For $\omega=2k$, the coefficients of $M(n+4,k,\omega)$ and $M(n-4,k,\omega)$ are both zero, then the recurrence relation \eqref{eq:b5} reduces to
a seven-term recurrence relation.
}
\end{remark}

\subsection{\textbf{Fast computations of the modified moments}}\label{sec:2.2}

In what follows we will be concerned with the fast computation of the modified moments by using the recurrence relation
\eqref{eq:b5}. According to the symmetry of the recurrence relation of the Chebyshev polynomials $T_n^\ast (x)$, it is
convenient to define $T_{-n}^\ast (x)=T_n^\ast (x)$ for $n=1,2,3,\ldots$. Consequently,
\begin{eqnarray*}
M(-n,k,\omega)=M(n,k,\omega), \quad k=1,2,3,\ldots.
\end{eqnarray*}
Moreover, It can be shown that \eqref{eq:b5} is valid,  not only for $n\geq 4$, but also for all integers of $n$.

Unfortunately,  the application of recurrence relations in the forward direction is not
always numerically stable. Practical experiments show that the modified moments $M(n,k,\omega), n=0,1,2,\ldots$
can be computed accurately by using the recurrence relation
\eqref{eq:b5} as long as $n\leq (k+\omega/2)$. However, for $n> (k+\omega/2)$,
forward recursion is no longer applicable due to the loss of significant figures increases. In
this case, \eqref{eq:b5} has to be solved as a boundary value problem.
Fortunately, we can use Oliver's algorithm \cite{Oliver} or Lozier's algorithm \cite{Lozier} to solve this problem for the modified moments
with five starting moments and three
end moments.
Particularly, for Lozier's algorithm, we can set three end moments to zero.
Also, this algorithm incorporates a numerical
test for determining the optimum location of the endpoint. The advantage is
that a user-required accuracy is automatically obtained, without computation of the asymptotic expansion.
In conclusion,  several starting values for the modified moments for forward recursion and Oliver's algorithm or Lozier's algorithm
are needed. In addition, the three
end moments can be computed by using asymptotic expansion in \cite{Erdelyi} or the method in \cite{He}.

Since the shifted Chebyshev polynomials $T_n^\ast(x)$ can be rewritten in terms of powers of $x$, the
five starting modified moments can be computed by the following formulas
\begin{align*}
M(0,k,\omega)=&I(0,k,\omega)\\
M(1,k,\omega)=&2I(1,k,\omega)-I(0,k,\omega)\\
M(2,k,\omega)=&8I(2,k,\omega)-8I(1,k,\omega)+I(0,k,\omega)\\
M(3,k,\omega)=&32I(3,k,\omega)-48I(2,k,\omega)+18I(1,k,\omega)-I(0,k,\omega)\\
M(4,k,\omega)=&128I(4,k,\omega)-256I(3,k,\omega)+160I(2,k,\omega)-32I(1,k,\omega)+I(0,k,\omega),
\end{align*}
where
\begin{eqnarray}\label{eq:b35}
I(j,k,\omega)=\int_0^1x^{\alpha+j}(1-x)^\beta \E^{\I2k x}H_\nu^{(1)}(\omega x)\D x,
\end{eqnarray}
which can be efficiently computed by the method in \cite{He} with small number of
points.

For a special case $\omega=2k$, the computation of the integral \eqref{eq:b35} is reduced
to the evaluation of
\begin{eqnarray*}
\widehat{I}(\alpha,\beta,\nu,\omega)=\int_0^1x^\alpha(1-x)^\beta \E^{\I \omega x}H_\nu^{(1)}(\omega x)\D x, \,\, \alpha>-1, \beta>-1,
\end{eqnarray*}
which can also be accurately computed through the following theorem.
\begin{theorem}
For all
$\alpha>-1, \beta>-1$ and $\omega>0$, it holds that
\begin{eqnarray}\label{eq:b36}
\widehat{I}(\alpha,\beta,\nu,\omega)=I_1+\I(I_2+I_3)-I_4,
\end{eqnarray}
where
\begin{eqnarray}\label{eq:b37}
I_1=C G^{1,4}_{4,6}\left(\begin{array}{c}-\frac{\alpha}{2}, \frac{1-\alpha}{2},\frac{1}{4},\frac{3}{4}
\\ \frac{\nu}{2},-\frac{\nu}{2},\frac{1+\nu}{2},
\frac{1-\nu}{2},-\frac{\alpha+\beta+1}{2}, -\frac{\alpha+\beta}{2}
\end{array}\Bigg|\omega^2 \right),
\end{eqnarray}
\begin{eqnarray}\label{eq:b38}
I_2=\omega C G^{1,4}_{4,6}\left(\begin{array}{c}-\frac{\alpha+1}{2}, -\frac{\alpha}{2},-\frac{1}{4},\frac{1}{4}
\\ \frac{\nu}{2},-\frac{\nu}{2},-\frac{1-\nu}{2},
-\frac{1+\nu}{2},-\frac{\alpha+\beta+2}{2}, -\frac{\alpha+\beta+1}{2}
\end{array}\Bigg|\omega^2 \right),
\end{eqnarray}
\begin{eqnarray}\label{eq:b39}
I_3=-C G^{2,4}_{5,7}\left(\begin{array}{c}-\frac{\alpha}{2}, \frac{1-\alpha}{2},\frac{1}{4},\frac{3}{4},\frac{1-\nu}{2}
\\ -\frac{\nu}{2},\frac{\nu}{2},\frac{1+\nu}{2},\frac{1-\nu}{2},
\frac{1-\nu}{2},-\frac{\alpha+\beta+1}{2}, -\frac{\alpha+\beta}{2}
\end{array}\Bigg|\omega^2 \right),
\end{eqnarray}
\begin{eqnarray}\label{eq:b40}
I_4=-\omega CG^{2,4}_{5,7}\left(\begin{array}{c}-\frac{\alpha+1}{2}, -\frac{\alpha}{2},-\frac{1}{4},\frac{1}{4},\frac{1-\nu}{2}
\\ -\frac{\nu}{2},\frac{\nu}{2},\frac{1-\nu}{2},-\frac{1+\nu}{2},
\frac{\nu-1}{2},-\frac{\alpha+\beta+2}{2}, -\frac{\alpha+\beta+1}{2}
\end{array}\Bigg|\omega^2 \right),
\end{eqnarray}
 and
\begin{equation}\nonumber
  \label{eq:b41}
  G^{m,n}_{p,q}\left(\begin{array}{c} a_1,\ldots, a_n, a_{n+1}, \ldots, a_p \\ b_1,\ldots, b_m, b_{m+1}, \ldots, b_q \end{array}\Bigg| z\right) = \frac 1 {2\pi i} \oint_L \frac{\prod_{k=1}^m \Gamma(b_k-s) \prod_{j=1}^n \Gamma(1-a_j+s)}{\prod_{k=m+1}^q \Gamma(1-b_k+s) \prod_{j=n+1}^p \Gamma(a_j-s)} z^s ds
\end{equation}
is Meijer G--function \cite{Bateman}, $C=2^{-(\beta+\frac{3}{2})}\Gamma(\beta+1)$.
\end{theorem}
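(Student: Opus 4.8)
The plan is to strip the integrand down to real, classically tabulated pieces and then push the endpoint weight $x^\alpha(1-x)^\beta$ through an Euler-type beta integral for Meijer $G$--functions. First, using $H_\nu^{(1)}(\omega x)=J_\nu(\omega x)+\I Y_\nu(\omega x)$ and $\E^{\I\omega x}=\cos(\omega x)+\I\sin(\omega x)$, I would write
\begin{align*}
\widehat I(\alpha,\beta,\nu,\omega)=&\int_0^1 x^\alpha(1-x)^\beta\cos(\omega x)J_\nu(\omega x)\D x-\int_0^1 x^\alpha(1-x)^\beta\sin(\omega x)Y_\nu(\omega x)\D x\\
&+\I\Big(\int_0^1 x^\alpha(1-x)^\beta\sin(\omega x)J_\nu(\omega x)\D x+\int_0^1 x^\alpha(1-x)^\beta\cos(\omega x)Y_\nu(\omega x)\D x\Big),
\end{align*}
so that $I_1,I_2,I_3,I_4$ in \eqref{eq:b36} are, up to this sign pattern, the $\cos\cdot J_\nu$, $\sin\cdot J_\nu$, $\cos\cdot Y_\nu$ and $\sin\cdot Y_\nu$ integrals; the extra power $\omega x$ hidden in $\sin(\omega x)$ will be responsible for the factor $\omega$ in $I_2$ and $I_4$.

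Second, I would write each trigonometric--Bessel product as a single Meijer $G$--function of argument $(\omega x)^2$. Because $\cos z=\sqrt{\pi z/2}\,J_{-1/2}(z)$ and $\sin z=\sqrt{\pi z/2}\,J_{1/2}(z)$, the two products with $J_\nu$ become products of two Bessel functions of the first kind, and the classical series
\[
J_\mu(z)J_\nu(z)=\frac{(z/2)^{\mu+\nu}}{\Gamma(\mu+1)\Gamma(\nu+1)}\,{}_2F_3\!\left(\begin{array}{c}\frac{\mu+\nu+1}{2},\ \frac{\mu+\nu+2}{2}\\ \mu+1,\ \nu+1,\ \mu+\nu+1\end{array}\Bigg|-z^2\right)
\]
turns each into a $G^{1,2}_{2,4}\big((\omega x)^2\big)$, the leading power of $z$ being absorbed by the shift rule (a factor $z^{c}$ is moved inside $G^{m,n}_{p,q}$ by adding $c$ to all of its parameters); this produces the orders $\tfrac\nu2,-\tfrac\nu2,\tfrac{1\pm\nu}{2}$ and the parameters $\tfrac14,\tfrac34$ (resp.\ $-\tfrac14,\tfrac14$) in \eqref{eq:b37}--\eqref{eq:b38}. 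For the two products with $Y_\nu$ I would insert $Y_\nu=(\cos\nu\pi\,J_\nu-J_{-\nu})/\sin\nu\pi$, obtaining a combination of a $\nu$- and a $(-\nu)$-indexed $G^{1,2}_{2,4}$; the standard identity expressing Bessel functions of the second kind as a single Meijer $G$ then merges the two into one $G^{2,2}_{3,5}$, the $1/\sin\nu\pi$ singularities cancelling so that the formula extends to all $\nu$ by continuity, which explains the repeated lower parameter $\tfrac{1-\nu}{2}$ in \eqref{eq:b39}--\eqref{eq:b40}.

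Third, I would integrate term by term inside the Mellin--Barnes representation: writing $G^{m,n}_{p,q}\big((\omega x)^2\big)=\tfrac1{2\pi\I}\oint_L\chi(\sigma)(\omega x)^{2\sigma}\D\sigma$ with $\chi$ the associated ratio of Gamma functions and interchanging the integrations gives
\[
\int_0^1 x^{\gamma}(1-x)^\beta G^{m,n}_{p,q}\big((\omega x)^2\big)\D x=\frac{\Gamma(\beta+1)}{2\pi\I}\oint_L\chi(\sigma)\,\frac{\Gamma(\gamma+1+2\sigma)}{\Gamma(\gamma+\beta+2+2\sigma)}\,\omega^{2\sigma}\D\sigma,
\]
and the Legendre duplication formula splits $\Gamma(\gamma+1+2\sigma)/\Gamma(\gamma+\beta+2+2\sigma)$ into four Gamma factors in $\sigma$: two in the numerator adding the upper parameters $-\tfrac\gamma2,\tfrac{1-\gamma}{2}$ and two in the denominator adding the lower parameters $-\tfrac{\gamma+\beta}{2},-\tfrac{\gamma+\beta+1}{2}$, with $\gamma=\alpha$ for the two $\cos$ integrals and $\gamma=\alpha+1$ for the two $\sin$ integrals (the latter carrying the explicit factor $\omega$). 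Hence $G^{1,2}_{2,4}\mapsto G^{1,4}_{4,6}$ and $G^{2,2}_{3,5}\mapsto G^{2,4}_{5,7}$, and gathering the powers of $2$ and $\pi$ coming from the duplication, the beta integral, and the normalizations $\cos z,\sin z=\sqrt{\pi z/2}\,J_{\mp1/2}(z)$ together with the ${}_2F_3\to G$ conversion (whose $\nu$-dependence cancels) leaves precisely $C=2^{-(\beta+3/2)}\Gamma(\beta+1)$; substituting the four resulting $G$-functions back into the decomposition yields \eqref{eq:b36}.

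The step I expect to be the real obstacle is the bookkeeping in the last two stages: producing the $Y_\nu$ products as single $G^{2,2}_{3,5}$-functions with exactly the parameter lists of \eqref{eq:b39}--\eqref{eq:b40} (the $J_{-\nu}$ term and the limiting case $\nu\in\ZZ$ need care), and following every elementary constant so that the common prefactor collapses to $C$. One also has to justify the interchange of summation and integration and the contour manipulations; this is where the hypotheses $\alpha>-1$, $\beta>-1$, $\omega>0$ are used, together with $\alpha-|\nu|>-1$ when $\nu\neq0$ so that the left-hand side converges at $x=0$, the Meijer $G$ expressions on the right furnishing the analytic continuation in $\alpha$ otherwise.
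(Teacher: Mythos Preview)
Your proposal is correct and follows the same route as the paper: split $\E^{\I\omega x}H_\nu^{(1)}(\omega x)$ into the four products $\cos\cdot J_\nu$, $\sin\cdot J_\nu$, $\cos\cdot Y_\nu$, $\sin\cdot Y_\nu$, express each as a single Meijer $G$-function of $(\omega x)^2$, and then apply an Euler-type beta integral to raise the order by $(2,2)$ in each case. The only cosmetic difference is that the paper quotes the product-to-$G$ and the beta-integral identities directly from tables---using $\cos z={}_0F_1(;\tfrac12;-z^2/4)$, $\sin z=z\,{}_0F_1(;\tfrac32;-z^2/4)$ together with the known ${}_0F_1\cdot J_\nu$ and ${}_0F_1\cdot Y_\nu$ formulas and the standard Euler integral for Meijer $G$---whereas you reconstruct those same identities from the $J_\mu J_\nu$ product series and a direct Mellin--Barnes calculation.
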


\begin{proof}
Substituting $H_\nu^{(1)}(x)=J_\nu(x)+\I Y_\nu(x)$ into $\widehat{I}(\alpha,\beta,\nu,\omega)$ gives
\begin{eqnarray}\label{eq:b42}
\nonumber &&I(\alpha,\beta,\nu,\omega)\\
\nonumber &=&\int_0^1x^\alpha(1-x)^\beta \cos(\omega x)J_\nu(\omega x)\D x+\I\int_0^1x^\alpha(1-x)^\beta \sin(\omega x)J_\nu(\omega x)\D x+\\
&&\I\int_0^1x^\alpha(1-x)^\beta \cos(\omega x)Y_\nu(\omega x)\D x-\int_0^1x^\alpha(1-x)^\beta \sin(\omega x)Y_\nu(\omega x)\D x.
\end{eqnarray}
Note that \cite{besselj, bessely}
\begin{eqnarray}\label{eq:b43}
\nonumber &&\leftidx{_0}{F}{_1}(;b;-\frac{z^2}{4})J_\nu(z)\\
&=&\frac{\Gamma(b)}{\sqrt{\pi}}2^{b-1}G^{1,2}_{2,4}\left(\begin{array}{c}\frac{1-b}{2}, 1-\frac{b}{2}
\\ -\frac{\nu}{2},\frac{\nu}{2},1-b+\frac{\nu}{2},1-b-\frac{\nu}{2}
\end{array}\Bigg|z^2 \right),
\end{eqnarray}
\begin{eqnarray}\label{eq:b44}
\nonumber &&\leftidx{_0}{F}{_1}(;b;-\frac{z^2}{4})Y_\nu(z)\\
&=&\frac{\Gamma(b)}{\sqrt{\pi}}2^{b-1}G^{1,2}_{2,4}\left(\begin{array}{c}\frac{1-b}{2}, 1-\frac{b}{2}, \frac{1-\nu}{2}
\\ -\frac{\nu}{2},\frac{\nu}{2},\frac{1-\nu}{2}, 1-b+\frac{\nu}{2},1-b-\frac{\nu}{2}
\end{array}\Bigg|z^2 \right).
\end{eqnarray}
On the other hand, there holds \cite{Meijer}
\begin{eqnarray}\label{eq:b45}
\nonumber & &\int_0^x t^{\alpha-1}(x-t)^{\beta-1}G^{m,n}_{p,q}\left(\begin{array}{c} a_1\ldots a_n, a_{n+1} \ldots a_p \\ b_1\ldots b_m, b_{m+1} \ldots b_q \end{array}\Bigg|\omega t^{l}\right)dt \\
&=&\frac{l^{-\beta}\Gamma(\beta)}{x^{1-\alpha-\beta}}G^{m,n+l}_{p+l,q+l}\left(\begin{array}{c} \frac{1-\alpha}{l},\ldots, \frac{l-\alpha}{l}, a_1\ldots a_n, a_{n+1} \ldots a_p \\ b_1\ldots b_m, b_{m+1} \ldots b_q, \frac{1-\alpha-\beta}{l},\ldots, \frac{l-\alpha-\beta}{l} \end{array}\Bigg|\omega x^{l}\right),
\end{eqnarray}
and \cite{Luke}
\begin{eqnarray}\label{eq:b46}
\cos(z)=\leftidx{_0}{F}{_1}\left(;\frac{1}{2};-\frac{z^2}{4}\right),\quad \sin(z)=\leftidx{_0}{F}{_1}\left(;\frac{3}{2};-\frac{z^2}{4}\right),
\end{eqnarray}

According to \eqref{eq:b42}, and setting $b=1/2, 3/2$ in \eqref{eq:b43}--\eqref{eq:b44}, respectively,  then substituting
them into \eqref{eq:b45}, we can easily derive the result \eqref{eq:b36}.
\end{proof}

\begin{remark}\label{re:3}
{\rm We choose $10$ points for the Gauss-type method in \cite{He} to evaluate $I(j,k,\omega), j=0,1,2,3,4$
for $\omega\neq 2k$. While for $\omega=2k$, we compute them by using  the formula \eqref{eq:b36} through
Meijer G--function, which  can be efficiently computed with the {\sc Matlab} code \textbf{MeijerG.m} \cite{Oreshkin}.
}
\end{remark}

\section{\textbf{Error estimate about $k$ and $\omega$ for the method \eqref{eq:b3}}}\label{sec:3}
\setcounter{theorem}{0} \setcounter{equation}{0}
\setcounter{lemma}{0} \setcounter{proposition}{0}
\setcounter{corollary}{0}
In \cite{Sloan1, Sloan2}, Sloan and Smith presented a product-integration rule with the Clenshaw--Curtis points
for approximating the integral $\int_{-1}^1k(x)f(x)dx$, where $k(x)$ is integrable
and $f(x)$ is continuous. Moreover, the authors also considered the theoretical convergence properties of the method,
and obtained the satisfactory rates of convergence for all continuous functions $f(x)$, if $k(x)$
satisfies  $\int_{-1}^{1}|k(x)|^p dx<\infty$ for some $p>1$. Since
$$\int_{0}^{1}\big|x^\alpha(1-x)^\beta \E^{\I2kx}H_\nu^{(1)}(\omega x)\D \big|^p dx< \infty,$$
for all $p>1$
from \cite{Sloan1, Sloan2}, we see that the Clenshaw--Curtis--Filon--type method (\ref{eq:b3}) for integral (\ref{eq:a1})
is uniformly convergent in $N$ for fixed $k$ and $\omega$, that is
\begin{equation}\nonumber
\lim_{N\rightarrow \infty}\int_0^1P_{N+2s}(x)x^\alpha(1-x)^\beta \E^{\I2kx}H_\nu^{(1)}(\omega x)\D x=\int_0^1f(x)x^\alpha(1-x)^\beta \E^{\I2kx}H_\nu^{(1)}(\omega x)\D x.
\end{equation}
In the what follows we will consider the error estimate on $k$ and $\omega$ for the method \eqref{eq:b3}.
To obtain an error bound for method \eqref{eq:b3}, we first
introduce the following theorem.

\begin{theorem}\label{th:2}
For each $\alpha>-1, \beta>-1$, the asymptotics of the integral $\int_0^1x^{\alpha}(1-x)^\beta \E^{\I2k x}H_\nu^{(1)}(\omega x)\D x$
can be estimated by the following  three formulas.

(i) If $k$ is fixed and $\omega\rightarrow \infty$, there holds
\begin{eqnarray}\label{eq:c1}
\int_0^1x^{\alpha}(1-x)^\beta \E^{\I2k x}H_\nu^{(1)}(\omega x)\D x= O\left(\frac{1}{\omega^{1+\tau_1}}\right).
\end{eqnarray}
where $\tau_1=\min\left\{\alpha,\beta\right\}$.

(ii)  If $\omega$ is fixed and $k \rightarrow \infty$, there holds
\begin{eqnarray}\label{eq:c2}
\int_0^1x^{\alpha}(1-x)^\beta \E^{\I2k x}H_\nu^{(1)}(\omega x)\D x= \left\{
   \begin{array}{ll}
     O\left(\frac{1+\ln\left(k\right)}{k^{1+\alpha}}\right), & \hbox{$\nu=0, \alpha\leq\beta$}, \\
     O\left(\frac{1}{k^{1+\beta}}\right), & \hbox{$\nu=0, \alpha<\beta$}, \\
    O\left(\frac{1}{k^{1+\tau_2}}\right), & \hbox{$\nu\neq0$},
   \end{array}
 \right.
\end{eqnarray}
where $\tau_2=\min\left\{\alpha-|\nu|,\beta\right\}$.

(iii) If $\omega=2k$ and $\omega \rightarrow \infty$, there holds
\begin{eqnarray}\label{eq:c3}
\int_0^1x^{\alpha}(1-x)^\beta \E^{\I \omega x}H_\nu^{(1)}(\omega x)\D x=O\left(\frac{1}{\omega^{1+\tau_1}}\right).
\end{eqnarray}
\end{theorem}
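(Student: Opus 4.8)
The plan is to reduce each case to an oscillatory integral $\int_0^1 w(x)\E^{\I\Omega x}\D x$ whose amplitude $w$ has purely algebraic (or algebraic-times-logarithmic) endpoint singularities, and to read off the decay rate from the classical endpoint-contribution analysis (Erd\'elyi's lemma, cf. \cite{Erdelyi}): if $w\in C^\infty(0,1)$ with $w(x)=x^{\lambda}w_0(x)$ near $x=0$ and $w(x)=(1-x)^{\mu}w_1(x)$ near $x=1$, where $\lambda,\mu>-1$ and $w_0,w_1$ are smooth with $w_0(0),w_1(1)\neq0$, then as $\Omega\to\infty$ the two endpoints contribute $O(\Omega^{-1-\lambda})$ and $O(\Omega^{-1-\mu})$ respectively, so $\int_0^1 w\,\E^{\I\Omega x}\D x=O(\Omega^{-1-\min\{\lambda,\mu\}})$; and an extra factor $\ln x$ multiplying $x^{\lambda}$ at the origin worsens that endpoint's contribution to $O(\Omega^{-1-\lambda}\ln\Omega)$. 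I shall also use the standard facts about $H_\nu^{(1)}$ \cite{Abram}: as $z\to0^+$, $H_0^{(1)}(z)=\tfrac{2\I}{\pi}\ln z+c+O(z^2\ln z)$ while $H_\nu^{(1)}(z)=O(z^{-|\nu|})$ for $\nu\neq0$; and as $z\to\infty$, $H_\nu^{(1)}(z)=z^{-1/2}\E^{\I z}\rho_\nu(z)$ with $\rho_\nu$ smooth on $[1,\infty)$, $\rho_\nu(z)\to\sqrt{2/\pi}\,\E^{-\I(\nu\pi/2+\pi/4)}$ and $\rho_\nu^{(j)}(z)=O(z^{-1-j})$ for $j\geq1$, so $\rho_\nu$ and all its derivatives are bounded on $[1,\infty)$.

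For (i) I would split the integral at $x=1/\omega$. On $[0,1/\omega]$ the factor $\E^{\I2kx}$ contributes at most one oscillation and $|H_\nu^{(1)}(\omega x)|\lesssim(\omega x)^{-|\nu|}$ (resp. $\lesssim|\ln(\omega x)|$ if $\nu=0$), so crudely
\[
\Big|\int_0^{1/\omega}x^\alpha(1-x)^\beta\E^{\I2kx}H_\nu^{(1)}(\omega x)\D x\Big|\lesssim\omega^{-|\nu|}\int_0^{1/\omega}x^{\alpha-|\nu|}\D x=O(\omega^{-1-\alpha}),
\]
the prefactor $\omega^{-|\nu|}$ absorbing the $\nu$-dependence of $\int_0^{1/\omega}x^{\alpha-|\nu|}\D x=O(\omega^{|\nu|-\alpha-1})$ (the case $\nu=0$ being the same since $\int_0^1 u^\alpha|\ln u|\D u<\infty$). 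On $[1/\omega,1]$ I insert $H_\nu^{(1)}(\omega x)=(\omega x)^{-1/2}\E^{\I\omega x}\rho_\nu(\omega x)$, so the integral becomes $\omega^{-1/2}\int_{1/\omega}^1 x^{\alpha-1/2}(1-x)^\beta\rho_\nu(\omega x)\,\E^{\I(2k+\omega)x}\D x$; here $\rho_\nu(\omega x)$ and all its $x$-derivatives stay bounded (indeed $\tfrac{\D^j}{\D x^j}\rho_\nu(\omega x)=\omega^j\rho_\nu^{(j)}(\omega x)=O(\omega^{-1}x^{-1-j})$ for $j\geq1$ on $[1/\omega,1]$), and repeated integration by parts against $\E^{\I(2k+\omega)x}$ (using $2k+\omega\geq\omega$) makes every boundary contribution at the artificial endpoint $x=1/\omega$ of size $O(\omega^{-1-\alpha})$ after a short bookkeeping of powers of $\omega$, while the genuine endpoint $x=1$ contributes $O(\omega^{-1/2}\cdot\omega^{-1-\beta})=O(\omega^{-3/2-\beta})$ by the endpoint lemma. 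Hence the whole integral is $O(\omega^{-1-\alpha}+\omega^{-3/2-\beta})=O(\omega^{-1-\tau_1})$, because $\tau_1=\min\{\alpha,\beta\}\leq\alpha$ and $\omega^{-3/2-\beta}\leq\omega^{-1-\beta}\leq\omega^{-1-\tau_1}$.

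For (ii), with $\omega$ fixed the Hankel factor no longer oscillates: $H_\nu^{(1)}(\omega x)$ is analytic on $(0,1]$, $H_\nu^{(1)}(\omega)\neq0$, and near $x=0$ it behaves like $C(\omega x)^{-|\nu|}$ ($\nu\neq0$) or like $\tfrac{2\I}{\pi}\ln x+\textrm{const}$ ($\nu=0$). Thus $w(x):=x^\alpha(1-x)^\beta H_\nu^{(1)}(\omega x)$ fits the endpoint lemma with, at $x=1$, exponent $\mu=\beta$ and nonzero coefficient $H_\nu^{(1)}(\omega)$, and, at $x=0$, exponent $\lambda=\alpha-|\nu|$ for $\nu\neq0$, or exponent $\lambda=\alpha$ together with an extra $\ln x$ for $\nu=0$ (the logarithmic terms attached to less singular powers when $\nu$ is a nonzero integer enter only at subleading order). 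Applying the lemma to $\int_0^1 w(x)\E^{\I2kx}\D x$: for $\nu\neq0$ the two endpoints give $O(k^{-1-(\alpha-|\nu|)})$ and $O(k^{-1-\beta})$, hence $O(k^{-1-\tau_2})$; for $\nu=0$ they give $O(k^{-1-\alpha}(1+\ln k))$ and $O(k^{-1-\beta})$, whose maximum is $O\big((1+\ln k)/k^{1+\alpha}\big)$ when $\alpha\leq\beta$ and $O(k^{-1-\beta})$ when $\alpha>\beta$, which are the three listed estimates.

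Finally, (iii) is (i) specialised to $k=\omega/2$: the estimates above used only $2k+\omega\geq\omega$ and $2k+\omega\to\infty$, never that $k$ is bounded, and the inner crude bound is insensitive to $k$ altogether; with $k=\omega/2$ the combined phase is $\E^{2\I\omega x}$ (no resonance, as $H_\nu^{(1)}$ carries only the $\E^{+\I z}$ factor), and the same split-and-integrate-by-parts argument yields $O(\omega^{-1-\tau_1})$. I expect the principal technical obstacle to be the uniform-in-$\omega$ control of the remainder (and its derivatives) in the expansion $H_\nu^{(1)}(z)=z^{-1/2}\E^{\I z}\rho_\nu(z)$ on the interval $[1/\omega,1]$ whose left endpoint moves with $\omega$, together with the careful accounting of that spurious endpoint under repeated integration by parts; once these are in place, the remaining endpoint-contribution computations are routine.
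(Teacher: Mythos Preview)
Your argument is essentially correct but follows a genuinely different route from the paper. The paper does not work on the real line at all: it invokes the steepest-descent decomposition from \cite{He}, deforming the contour $[0,1]$ into two vertical rays in the upper half-plane. This rewrites the integral as $L_0-L_1$, where $L_0$ is an integral over $[0,\infty)$ with kernel $K_\nu(\omega x/(2k+\omega))\,\E^{-2kx/(2k+\omega)}$ and a prefactor $(2k+\omega)^{-1-\alpha}$, and $L_1$ has prefactor $(2k+\omega)^{-1-\beta}$ with an exponentially damped integrand. The three estimates then drop out of a single application of the theorem in \cite{Bleistein}, with no splitting at $1/\omega$, no artificial endpoint, and no bookkeeping of derivatives of $\rho_\nu$.

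Your approach---split at $1/\omega$, insert $H_\nu^{(1)}(z)=z^{-1/2}\E^{\I z}\rho_\nu(z)$, and feed the resulting amplitude into Erd\'elyi's endpoint lemma---is the real-variable counterpart and reaches the same orders (indeed slightly sharper at $x=1$, where you get $\omega^{-3/2-\beta}$ rather than $\omega^{-1-\beta}$). Its cost is exactly what you flag: you must show that the boundary terms at the moving point $x=1/\omega$ remain $O(\omega^{-1-\alpha})$ through successive integrations by parts, and that the remainder integral is eventually dominated by the endpoint contributions. This is doable (your estimate $\tfrac{\D^j}{\D x^j}\rho_\nu(\omega x)=O(\omega^{-1}x^{-1-j})$ is the right ingredient), but it takes a page of bookkeeping that the steepest-descent argument bypasses entirely. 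A minor point in part~(ii): for non-integer $\nu$ the factor $H_\nu^{(1)}(\omega x)/x^{-|\nu|}$ is not $C^\infty$ at $0$ (it contains a $x^{2|\nu|}$ term), so strictly you should split $H_\nu^{(1)}$ into its $J_{\pm\nu}$ pieces before invoking Erd\'elyi; this does not affect the leading order you claim.
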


\begin{proof}
By using the complex integration theory and substituting the original interval of integration by the
paths of steepest descent for the integral,
we can rewrite the integral $\int_0^1x^{\alpha}(1-x)^\beta \E^{\I2k x}H_\nu^{(1)}(\omega x)\D x$ as a sum of two line
integrals (which is a special case of Eq. (20) in \cite{He} with $f(x)=1, b=1$), that is
\begin{eqnarray}\label{eq:c4}
\int_0^1x^{\alpha}(1-x)^\beta \E^{\I2k x}H_\nu^{(1)}(\omega x)\D x=L_0[f]-L_1[f]
\end{eqnarray}
where
\begin{eqnarray}\label{eq:c5}
 L_0[f]=\frac{2\I^\alpha}{\I^\nu \pi(2k+\omega)^{1+\alpha}}\int_0^\infty\left(1-\frac{\I x}{2k+\omega}\right)^\beta K_\nu\left(\frac{\omega x}{2k+\omega}\right)
x^\alpha\E^{-2kx/(2k+\omega)}\D x,
\end{eqnarray}
\begin{eqnarray}\label{eq:c6}
 L_1[f]=\frac{(-\I)^\beta \I \E^{\I \omega}}{\pi(2k+\omega)^{1+\beta}}\int_0^\infty\left(1+\frac{\I x}{2k+\omega}\right)^\alpha  H_\nu^{(1)}\left(\omega+\frac{\I \omega x}{2k+\omega}\right)
\E^{\omega x/(2k+\omega)}x^\beta \E^{-x}\D x,
\end{eqnarray}
here, $K_\nu(x)$ is
the modified Bessel function of the second kind of order $\nu$ \cite{Abram}.

According to the Theorem in \cite{Bleistein}, when $\omega\rightarrow\infty$, for every fixed $k$, we have
\begin{eqnarray}\label{eq:c6b}
L_0[f]=O\left(\frac{1}{\omega^{1+\alpha}}\right), \quad L_1[f]=O\left(\frac{1}{\omega^{1+\beta}}\right),
\end{eqnarray}
which leads to \eqref{eq:c1} directly.

On the other hand, when $k\rightarrow\infty$, for every fixed $\omega$, we have
\begin{eqnarray}
\label{eq:c7}L_0[f]&=& \left\{
   \begin{array}{ll}
     O\left(\frac{1+\ln\left(k\right)}{k^{1+\alpha}}\right), & \hbox{$\nu=0, \alpha\leq\beta$}, \\
    O\left(\frac{1}{k^{1-|\nu|+\alpha}}\right), & \hbox{$\nu\neq0$},
   \end{array}
 \right.\\
\label{eq:c8} L_1[f]&=&O\left(\frac{1}{k^{1+\beta}}\right),
\end{eqnarray}
which derives \eqref{eq:c2} directly.

Eq. \eqref{eq:c3} can be derived by a similar way to the proof of \eqref{eq:c1}.
This complete the proof.

\end{proof}

{\bf Example 3.1.}  Let us consider the asymptotics of the integral
\begin{eqnarray}
  \label{eq:c9}\widetilde{I}_1(\alpha,\beta,\omega) &=&  \int_0^1x^{\alpha}(1-x)^\beta \E^{\I 20 x}H_\nu^{(1)}(\omega x)\D x.
\end{eqnarray}

{\bf Example 3.2.} Let us consider the asymptotics of the integral
\begin{eqnarray}
  \label{eq:c10}\widetilde{I}_2(\alpha,\beta,k)&=&  \int_0^1x^{\alpha}(1-x)^\beta \E^{\I 2k x}H_\nu^{(1)}(10 x)\D x.
\end{eqnarray}

{\bf Example 3.3.} Let us consider the asymptotics of the integral
\begin{eqnarray}
   \label{eq:c11}\widetilde{I}_3(\alpha,\beta,\omega)&=&  \int_0^1x^{\alpha}(1-x)^\beta \E^{\I \omega x}H_\nu^{(1)}(\omega x)\D x.
\end{eqnarray}
From Figs. \ref{fig:1}--\ref{fig:3}, we see that the asymptotic orders on $k$ and $\omega$ stated in Theorem \ref{th:2} are attainable.

According to Theorem \ref{th:2}, we can easily obtain the error bound for the Clenshaw--Curtis--Filon--type method \eqref{eq:b3},
by using the technique of Theorem 3.1 in \cite{Xu3}.

\begin{figure}[t]
 \center
\includegraphics[width=0.46\textwidth, height=0.18\textheight]{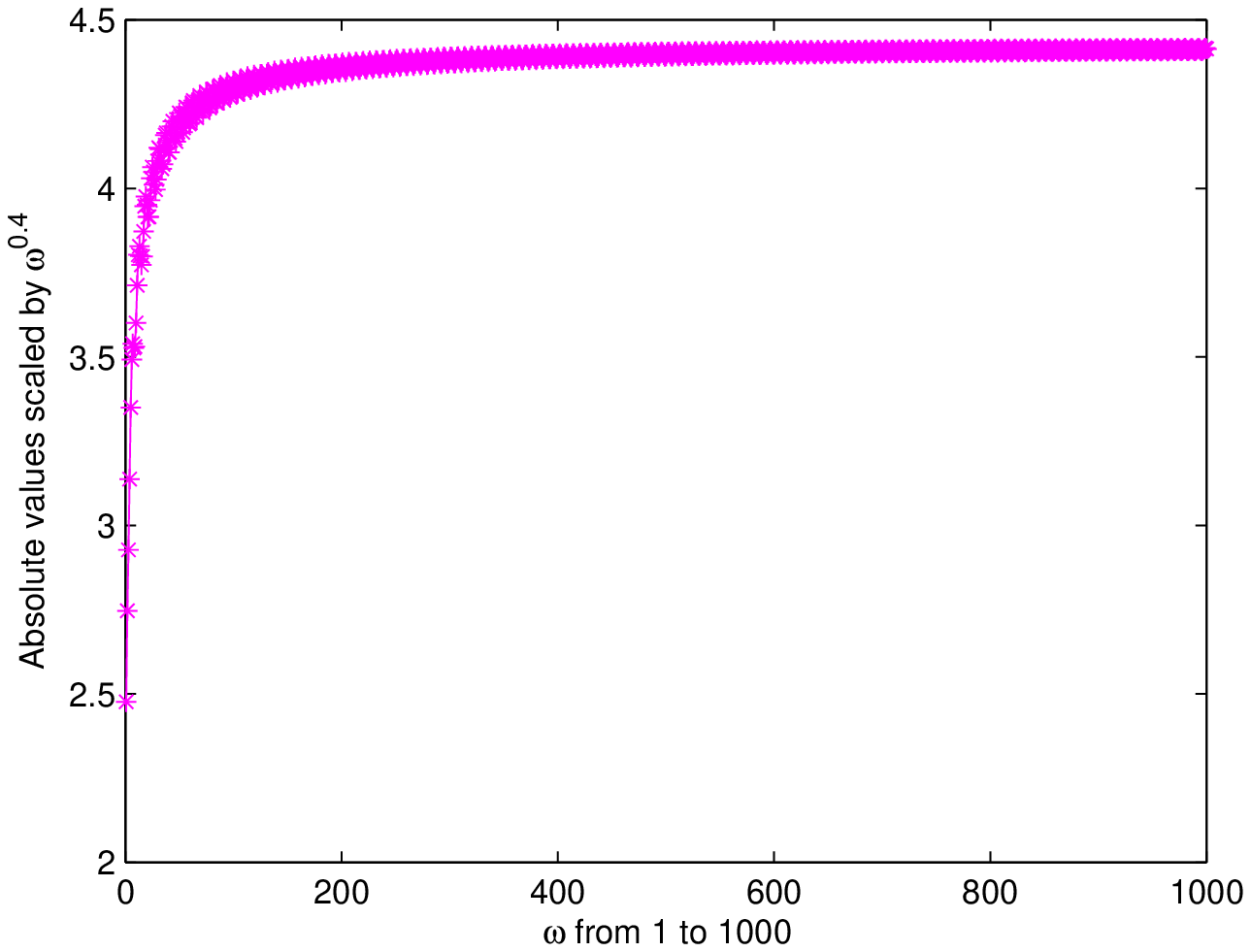}
\includegraphics[width=0.46\textwidth, height=0.18\textheight]{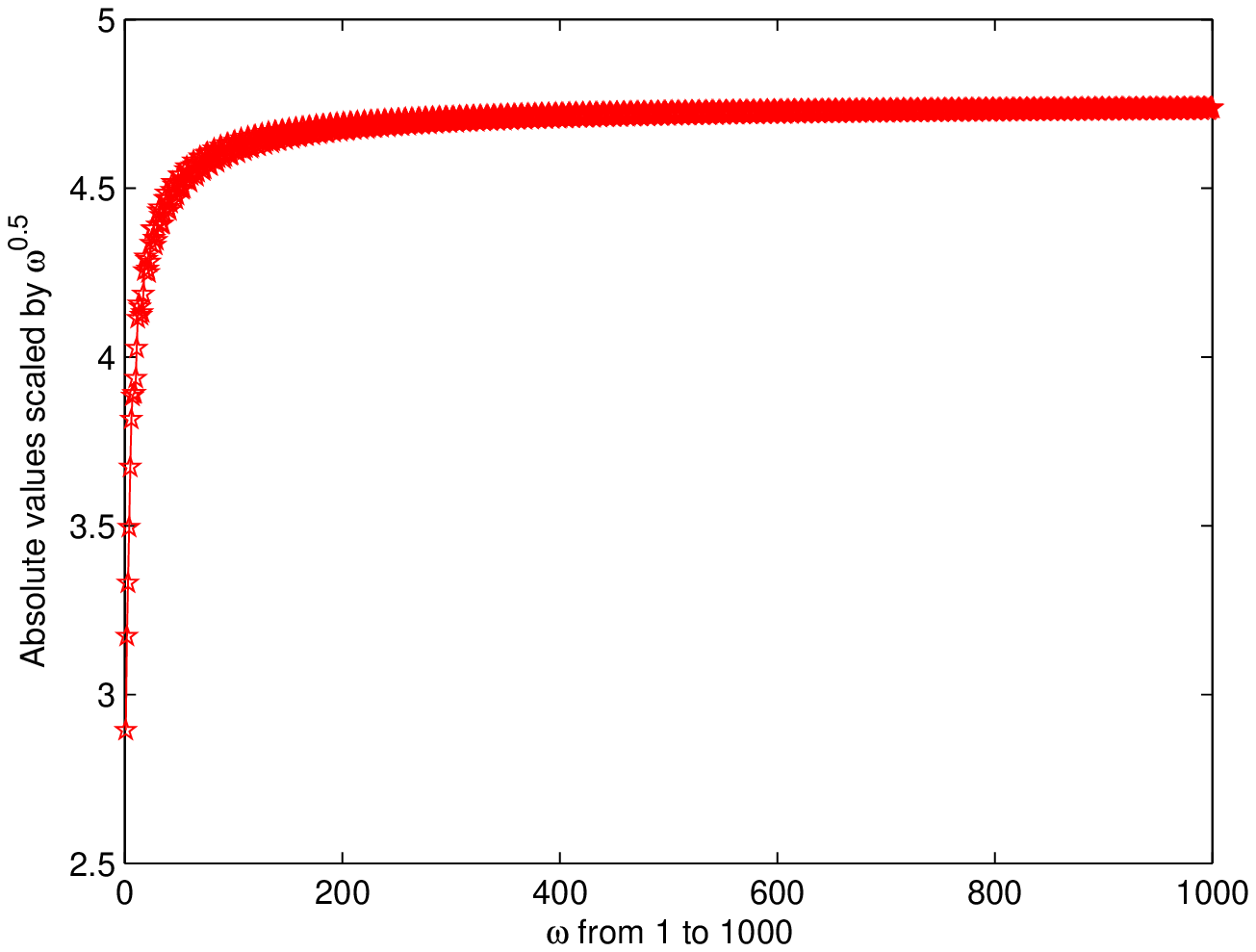}
\caption{Absolute values of \eqref{eq:c9} scaled by $\omega^{0.4}$ with $\nu=0,\alpha=-0.6,\beta=-0.3$ (left), and $\omega^{0.5}$ with $\nu=0.3,\alpha=-0.5,\beta=-0.2$ (right), respectively, when $\omega$ runs from $1$ to $1000$.}\label{fig:1}
\end{figure}

\begin{figure}[t]
 \center
\includegraphics[width=0.46\textwidth, height=0.18\textheight]{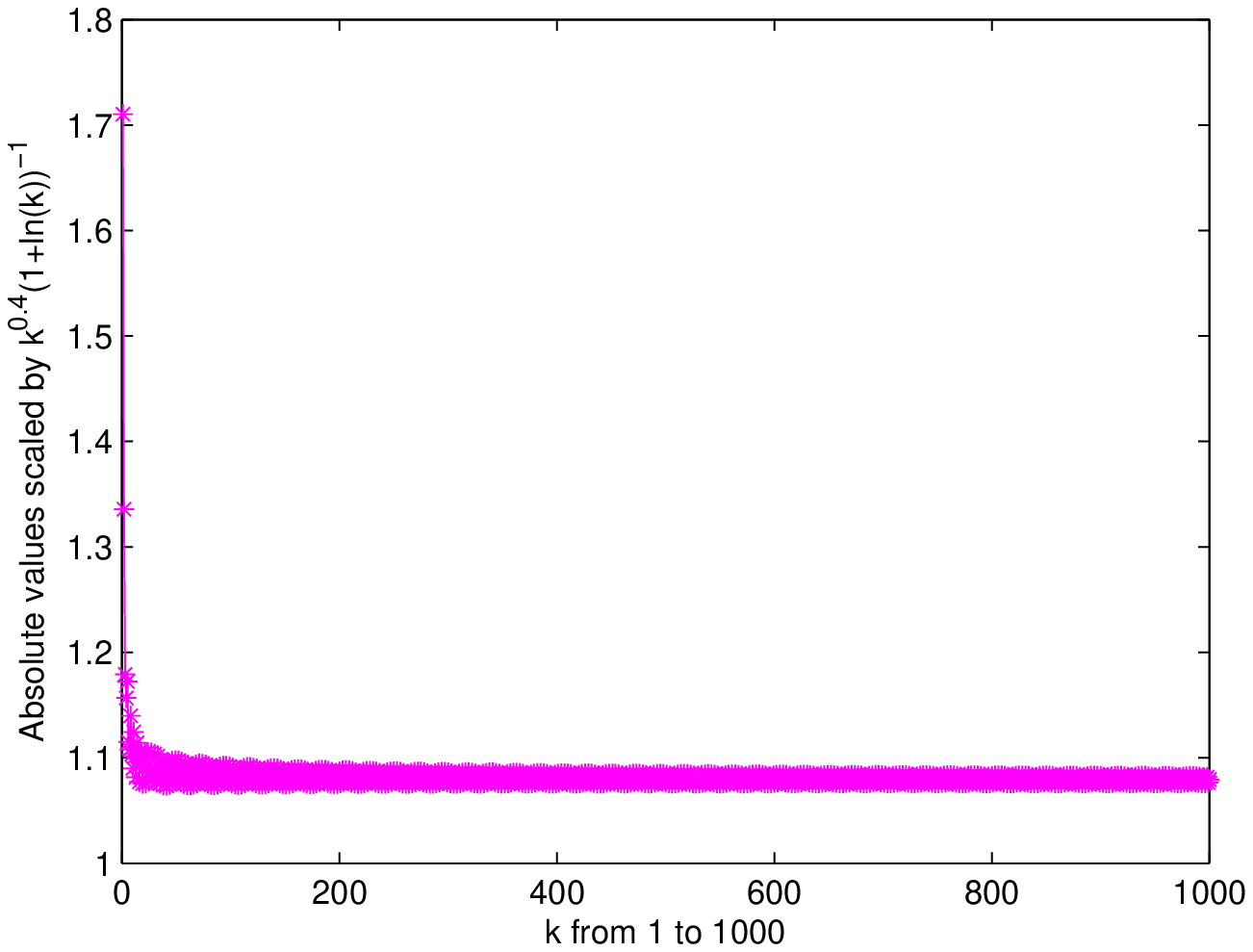}
\includegraphics[width=0.46\textwidth, height=0.18\textheight]{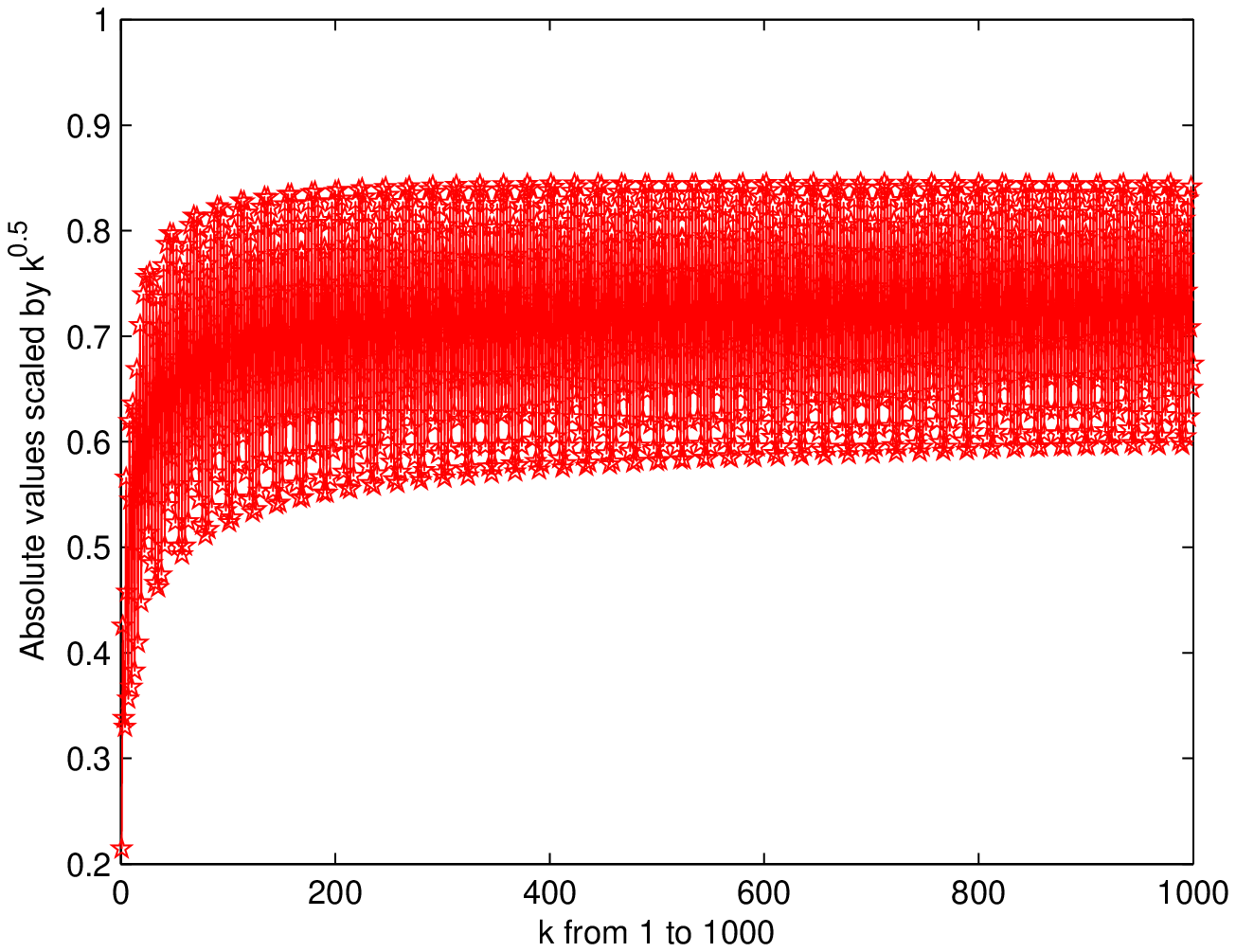}
\caption{Absolute values of \eqref{eq:c10} scaled by $k^{0.4}\left(1+\ln(k)\right)^{-1}$ with $\nu=0,\alpha=-0.6,\beta=-0.3$ (left), and $k^{0.5}$ with $\nu=0.3,\alpha=-0.2,\beta=-0.4$ (right), respectively, when $k$ runs from $1$ to $1000$.}\label{fig:2}
\end{figure}

\begin{figure}[t]
 \center
\includegraphics[width=0.46\textwidth, height=0.18\textheight]{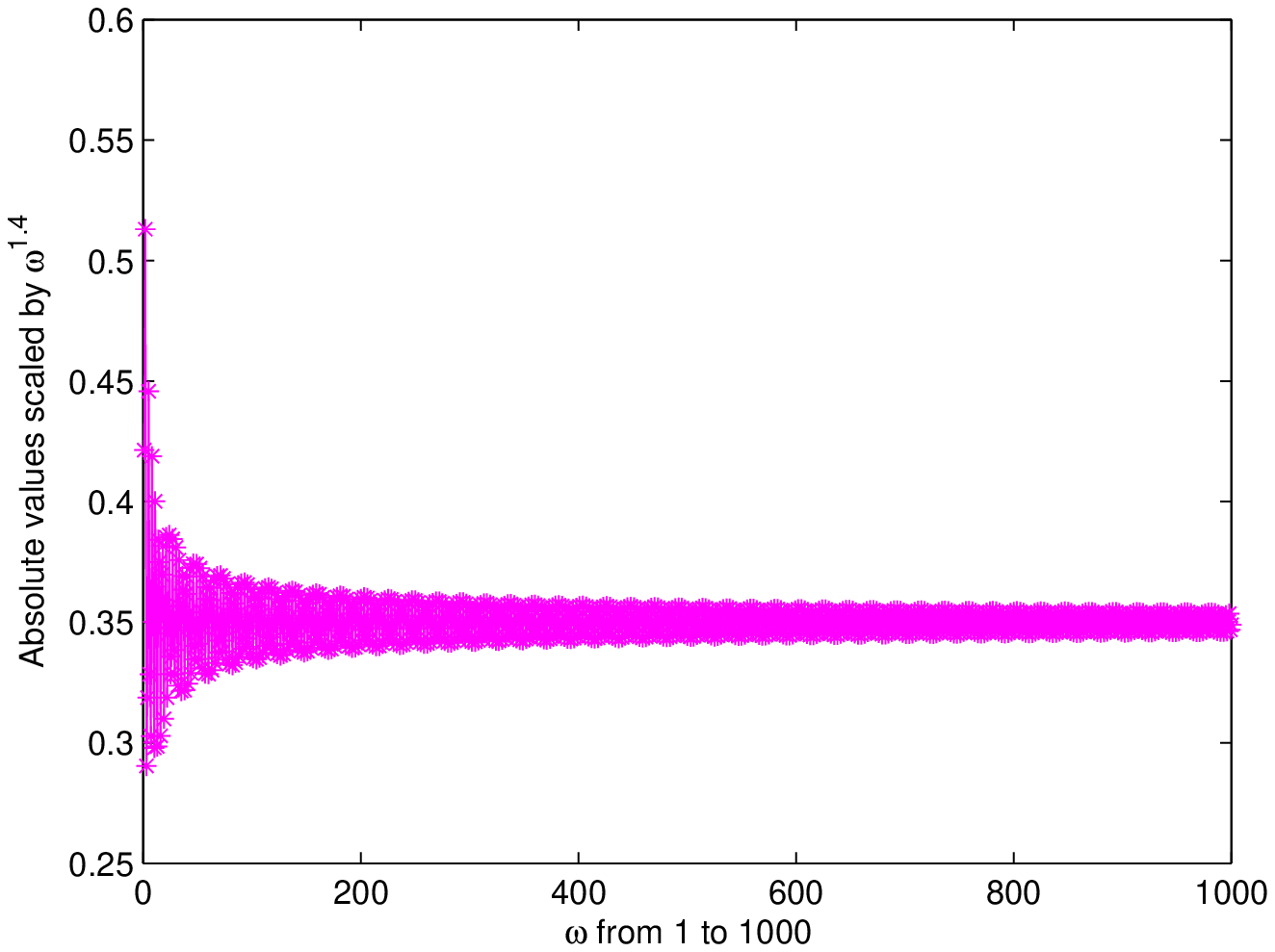}
\includegraphics[width=0.46\textwidth, height=0.18\textheight]{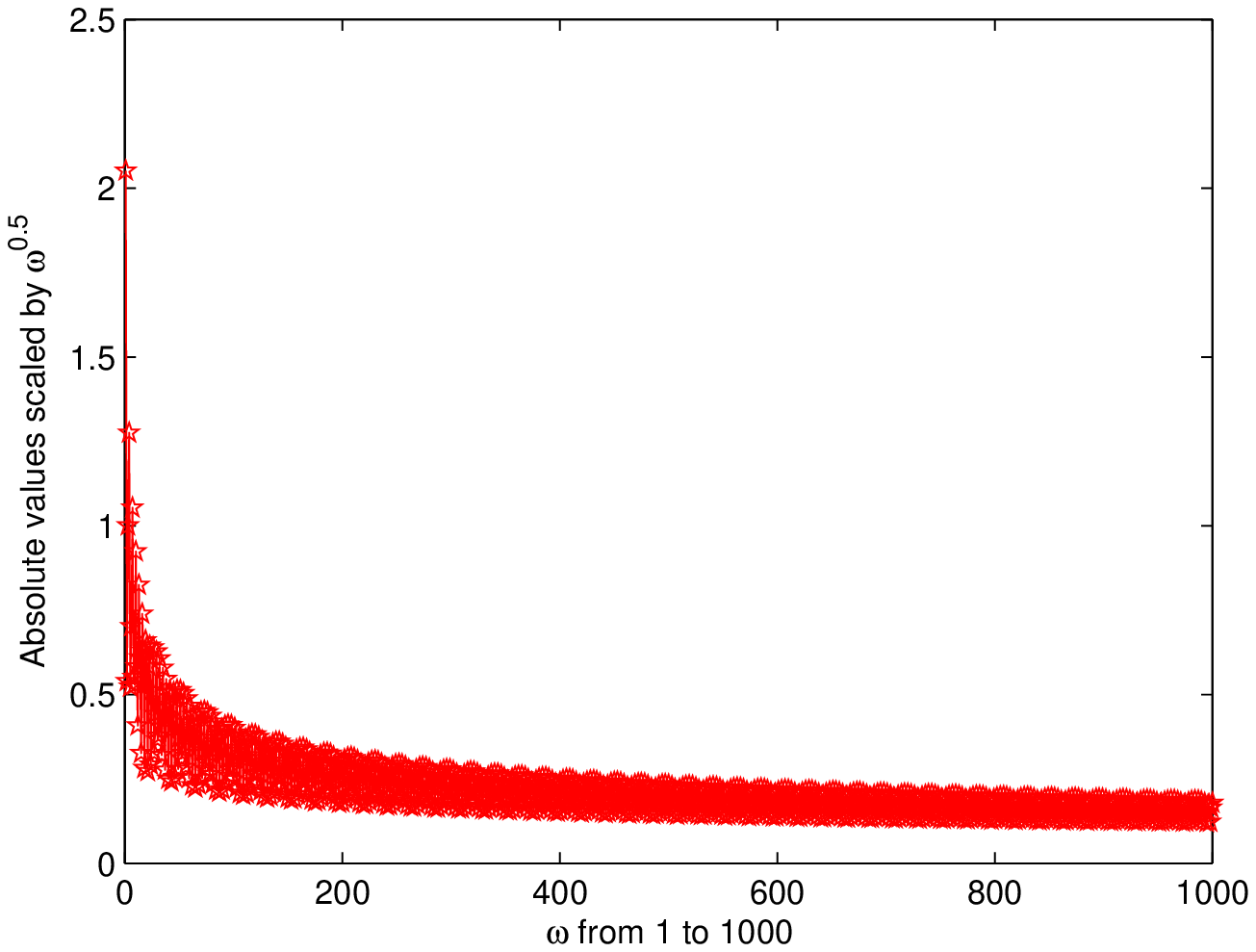}
\caption{Absolute values of \eqref{eq:c11} scaled by $\omega^{1.4}$ with $\nu=0,\alpha=0.4,\beta=0.5$ (left), and $\omega^{0.5}$ with $\nu=0.3,\alpha=-0.2,\beta=-0.5$ (right), respectively, when $\omega$ runs from $1$ to $1000$.}\label{fig:3}
\end{figure}

\begin{theorem}\label{th:3}
Suppose that $f(x)$ is a sufficiently smooth function on $[0,1]$, then for each $\alpha-|\nu|>-1, \beta>-1$ and fixed $N$, the error bound on $k$ and
$\omega$ for the Clenshaw--Curtis--Filon--type method \eqref{eq:b3}
for the integral \eqref{eq:a1} can be estimated by the following  three formulas.

(i) For fixed $k$, when $\omega\rightarrow \infty$, there holds
\begin{eqnarray}\label{eq:c12}
I[f]-Q_{N,s}^{CCF}[f]= O\left(\frac{1}{\omega^{s+2+\tau_1}}\right).
\end{eqnarray}
where $\tau_1=\min\left\{\alpha,\beta\right\}$.

(ii) For fixed $\omega$, when $k \rightarrow \infty$, there holds
\begin{eqnarray}\label{eq:c13}
I[f]-Q_{N,s}^{CCF}[f]= \left\{
     \begin{array}{ll}
     O\left(\frac{1+\ln\left(k\right)}{k^{s+2+\alpha}}\right), & \hbox{$\nu=0, \alpha\leq\beta$}, \\
     O\left(\frac{1}{k^{s+2+\beta}}\right), & \hbox{$\nu=0, \alpha<\beta$}, \\
    O\left(\frac{1}{k^{s+2+\tau_2}}\right), & \hbox{$\nu\neq0$},
   \end{array}
 \right.
\end{eqnarray}
where $\tau_2=\min\left\{\alpha-|\nu|,\beta\right\}$.

(iii) For a special case that $\omega=2k$,  when $\omega \rightarrow \infty$, there holds
\begin{eqnarray}\label{eq:c14}
I[f]-Q_{N,s}^{CCF}[f]= O\left(\frac{1}{\omega^{s+2+\tau_1}}\right).
\end{eqnarray}
\end{theorem}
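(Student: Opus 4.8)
The plan is to reduce the error $I[f] - Q_{N,s}^{CCF}[f]$ to an integral of the same type as in Theorem~\ref{th:2}, applied to the interpolation error $e(x) = f(x) - P_{N+2s}(x)$, and then to exploit the vanishing behaviour of $e$ at the endpoints together with the asymptotic estimates already established. Concretely, by the definition \eqref{eq:b3} of the method,
\begin{equation*}
I[f] - Q_{N,s}^{CCF}[f] = \int_0^1 e(x)\, x^\alpha (1-x)^\beta \E^{\I 2kx} H_\nu^{(1)}(\omega x)\D x,
\end{equation*}
where $e(x) = f(x) - P_{N+2s}(x)$ satisfies, by the Hermite interpolation conditions \eqref{eq:b1}, $e^{(\ell)}(0) = e^{(\ell)}(1) = 0$ for $\ell = 0,1,\ldots,s$. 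Hence $e(x) = x^{s+1}(1-x)^{s+1} g(x)$ for some function $g$ which is as smooth as $f$ permits (this is the standard factorization; one checks it directly from the vanishing of derivatives, e.g.\ by repeated application of the fundamental theorem of calculus or Taylor's theorem with integral remainder at the two endpoints). Substituting this back, the error becomes
\begin{equation*}
I[f] - Q_{N,s}^{CCF}[f] = \int_0^1 g(x)\, x^{\alpha + s + 1}(1-x)^{\beta + s + 1} \E^{\I 2kx} H_\nu^{(1)}(\omega x)\D x,
\end{equation*}
which is exactly an integral of the form \eqref{eq:a1} but with the exponents $\alpha, \beta$ replaced by $\alpha + s + 1$ and $\beta + s + 1$, and with $f$ replaced by $g$.

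Next I would apply Theorem~\ref{th:2} (in each of its three regimes (i), (ii), (iii)) to this new integral. For part (i), with $k$ fixed and $\omega \to \infty$, estimate \eqref{eq:c1} gives $O(\omega^{-1 - \min\{\alpha + s + 1,\, \beta + s + 1\}}) = O(\omega^{-(s+2+\tau_1)})$ with $\tau_1 = \min\{\alpha,\beta\}$, which is exactly \eqref{eq:c12}. For part (iii), with $\omega = 2k \to \infty$, estimate \eqref{eq:c3} yields the same order, giving \eqref{eq:c14}. For part (ii), with $\omega$ fixed and $k \to \infty$, estimate \eqref{eq:c2} applied with the shifted exponents produces: in the case $\nu = 0$, $\alpha \le \beta$, the bound $O\big((1+\ln k)\, k^{-(1 + \alpha + s + 1)}\big) = O\big((1+\ln k)\, k^{-(s+2+\alpha)}\big)$; in the case $\nu = 0$, $\alpha < \beta$, the bound $O(k^{-(s+2+\beta)})$; and in the case $\nu \neq 0$, the bound $O(k^{-(1 + \tau_2 + s + 1)}) = O(k^{-(s+2+\tau_2)})$ with $\tau_2 = \min\{\alpha - |\nu|, \beta\}$. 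These are precisely the three cases of \eqref{eq:c13}. This is the point where one invokes "the technique of Theorem~3.1 in \cite{Xu3}" referenced in the excerpt: the essential content is the factorization of the interpolation error and the substitution into the asymptotic estimate.

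The main obstacle — and the step requiring the most care — is justifying uniformly in $k$ and $\omega$ that the constant hidden in the $O$-notation does not depend on these parameters, i.e.\ that it depends only on $N$, $s$, and bounds on $f$ and its derivatives. This requires tracking the dependence of $g$ (equivalently, of the interpolation error $e$) on $f$: one needs that $\|g\|$ in the relevant norm (sup norm, or a norm involving finitely many derivatives, as dictated by the hypotheses of Theorem~\ref{th:2} and the steepest-descent estimates behind it) is bounded by a constant times a fixed norm of $f$, with the constant depending only on $N$ and $s$. Since $N$ is fixed, $P_{N+2s}$ is a fixed finite-dimensional projection and this is standard, but it must be stated. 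A secondary technical point is that Theorem~\ref{th:2} as stated applies with a general smooth integrand only implicitly (it is phrased for the integrand $1$); one must observe that its proof via \eqref{eq:c4}–\eqref{eq:c8}, i.e.\ the reduction to the line integrals $L_0[f]$ and $L_1[f]$ along paths of steepest descent, goes through verbatim with a general smooth $f$ (here $g$) in place of the constant, the decay rates being governed entirely by the endpoint exponents $\alpha + s + 1$ and $\beta + s + 1$. Once these two points are granted, the three formulas \eqref{eq:c12}, \eqref{eq:c13}, \eqref{eq:c14} follow immediately from the computation above.
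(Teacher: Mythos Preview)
Your proposal is correct and matches the paper's approach: the paper does not give a detailed proof of Theorem~\ref{th:3} at all, stating only that it follows from Theorem~\ref{th:2} ``by using the technique of Theorem 3.1 in \cite{Xu3}'', which is precisely the factorization $e(x)=x^{s+1}(1-x)^{s+1}g(x)$ and exponent shift that you carry out. Your two caveats (uniformity of the constant in $g$ for fixed $N$, and that the steepest-descent argument behind Theorem~\ref{th:2} accommodates a general smooth prefactor) are the right technical points to flag and are implicitly assumed in the paper.
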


\section{\textbf{Numerical examples}}\label{sec:4}
\setcounter{theorem}{0} \setcounter{equation}{0}
\setcounter{lemma}{0} \setcounter{proposition}{0}
\setcounter{corollary}{0}

In this section, we will present several examples to illustrate the efficiency and accuracy of the proposed method.
Throughout the paper, all numerical computations were implemented on the R2012a version of the {\sc Matlab} system.
The experiments were performed on a computer with 3.20 GHz processor and 4 GB of RAM.
In addition, the exact values of all the considered integrals $I[f]$ were computed in the
{\sc Maple 17}  using 32 decimal digits precision arithmetic.

\begin{figure}[hbtp]
 \center
\includegraphics[width=0.46\textwidth, height=0.18\textheight]{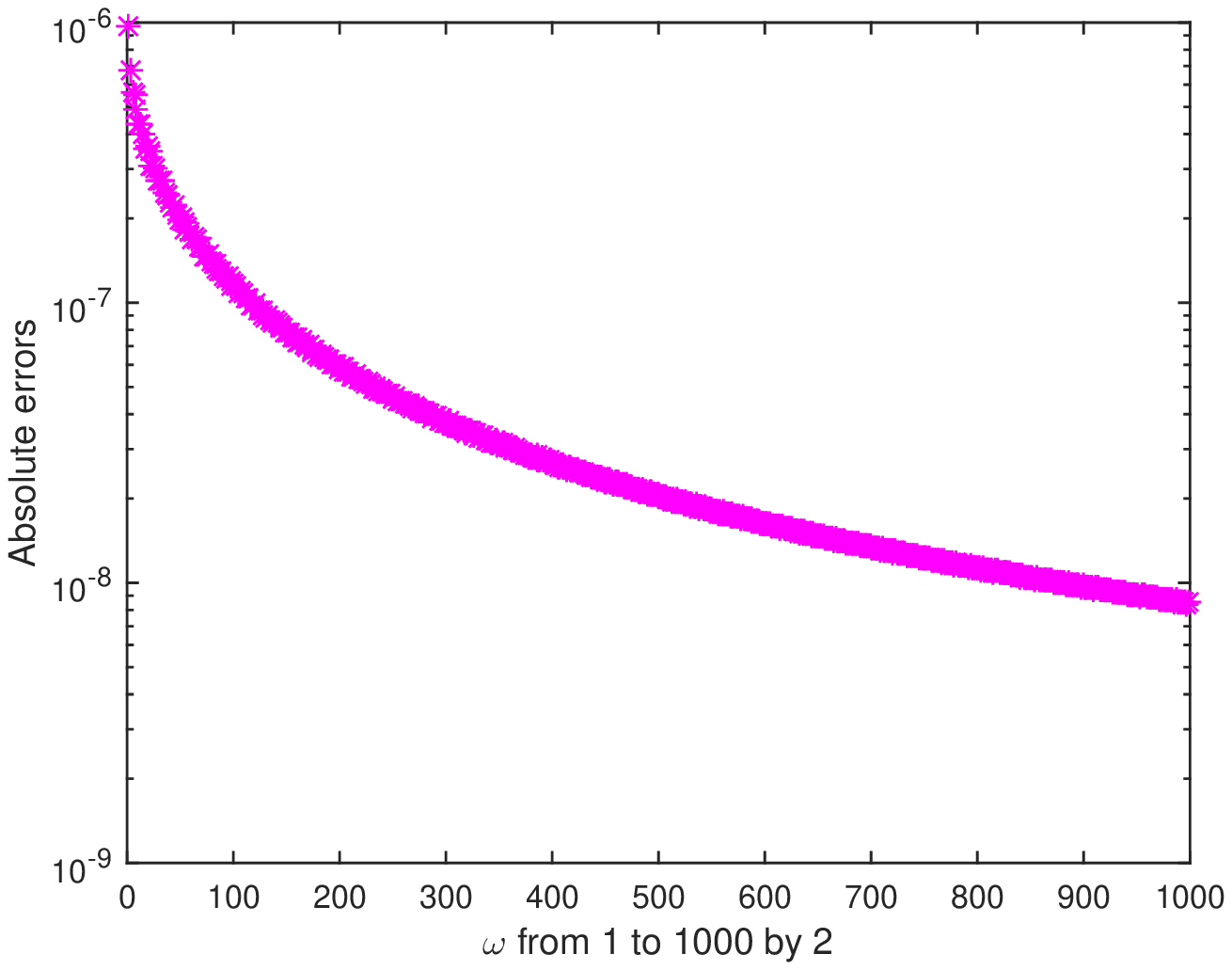}
\includegraphics[width=0.46\textwidth, height=0.18\textheight]{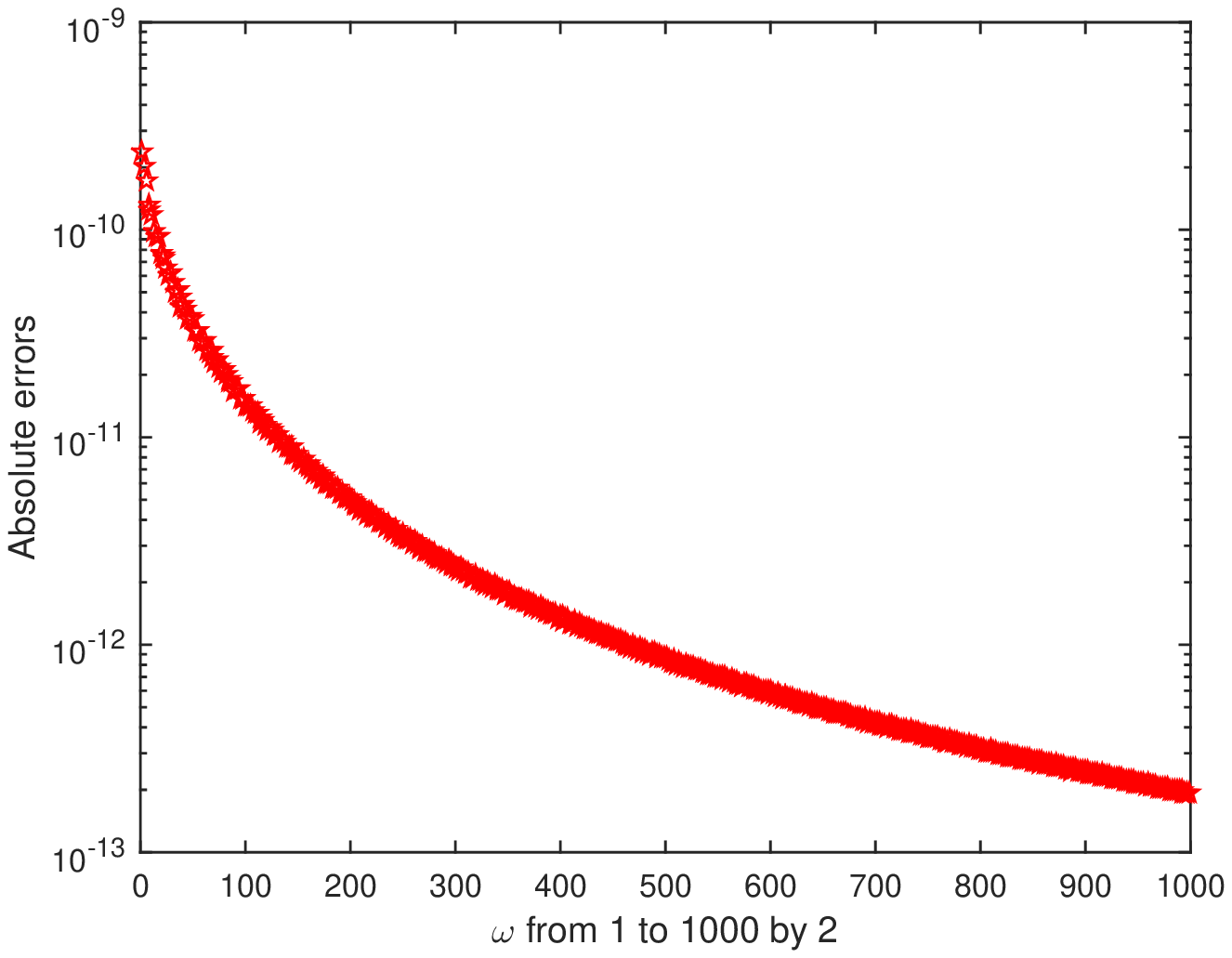}
\caption{Absolute errors for the Clenshaw--Curtis--Filon--type method for the integral \eqref{eq:d1} with $s=0$ (left) and $s=1$ (right), when
$N=4$, $k=50$, $\omega$  from $1$ to $1000$ by 2.}\label{fig:4}
\end{figure}
\begin{figure}[hbtp]
 \center
\includegraphics[width=0.46\textwidth, height=0.18\textheight]{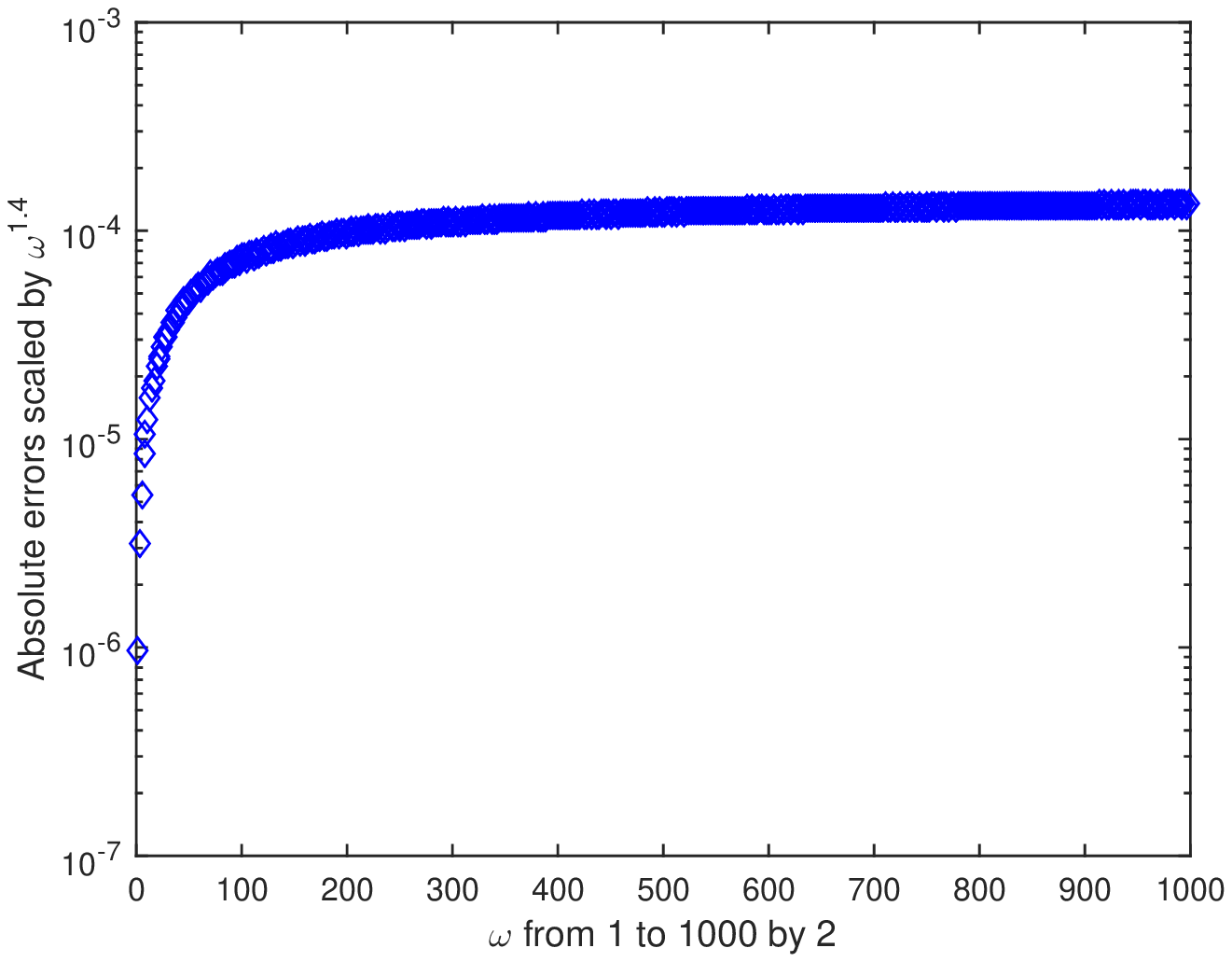}
\includegraphics[width=0.46\textwidth, height=0.18\textheight]{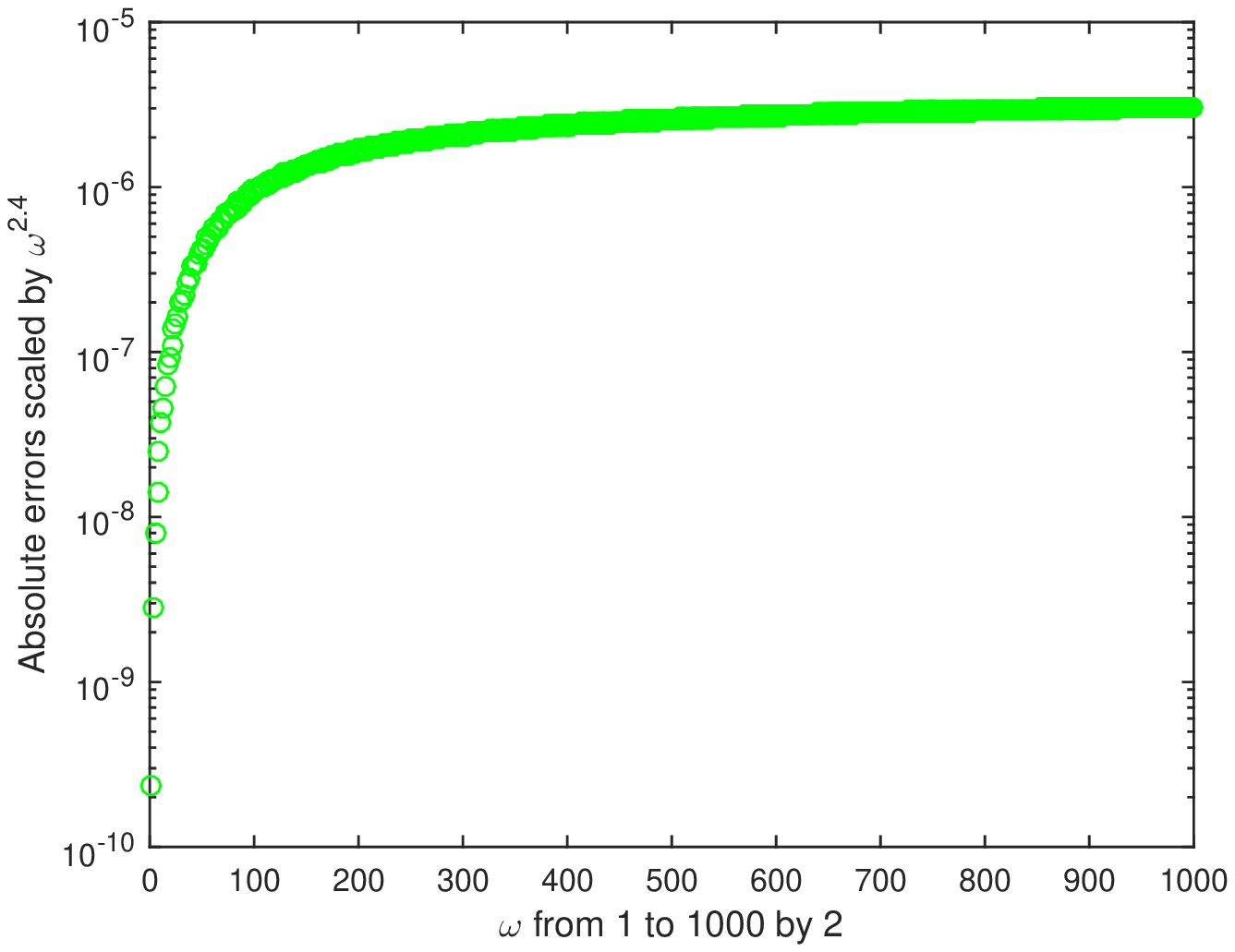}
\caption{Absolute errors scaled by $\omega^{1.4}$ with $s=0$ (left) and $\omega^{2.4}$ with $s=1$ (right) for the Clenshaw--Curtis--Filon--type method for the integral \eqref{eq:d1}
when $N=4$, $k=50$, $\omega$  from $1$ to $1000$ by 2.}\label{fig:5}
\end{figure}

\begin{figure}[hbtp]
 \center
\includegraphics[width=0.46\textwidth, height=0.18\textheight]{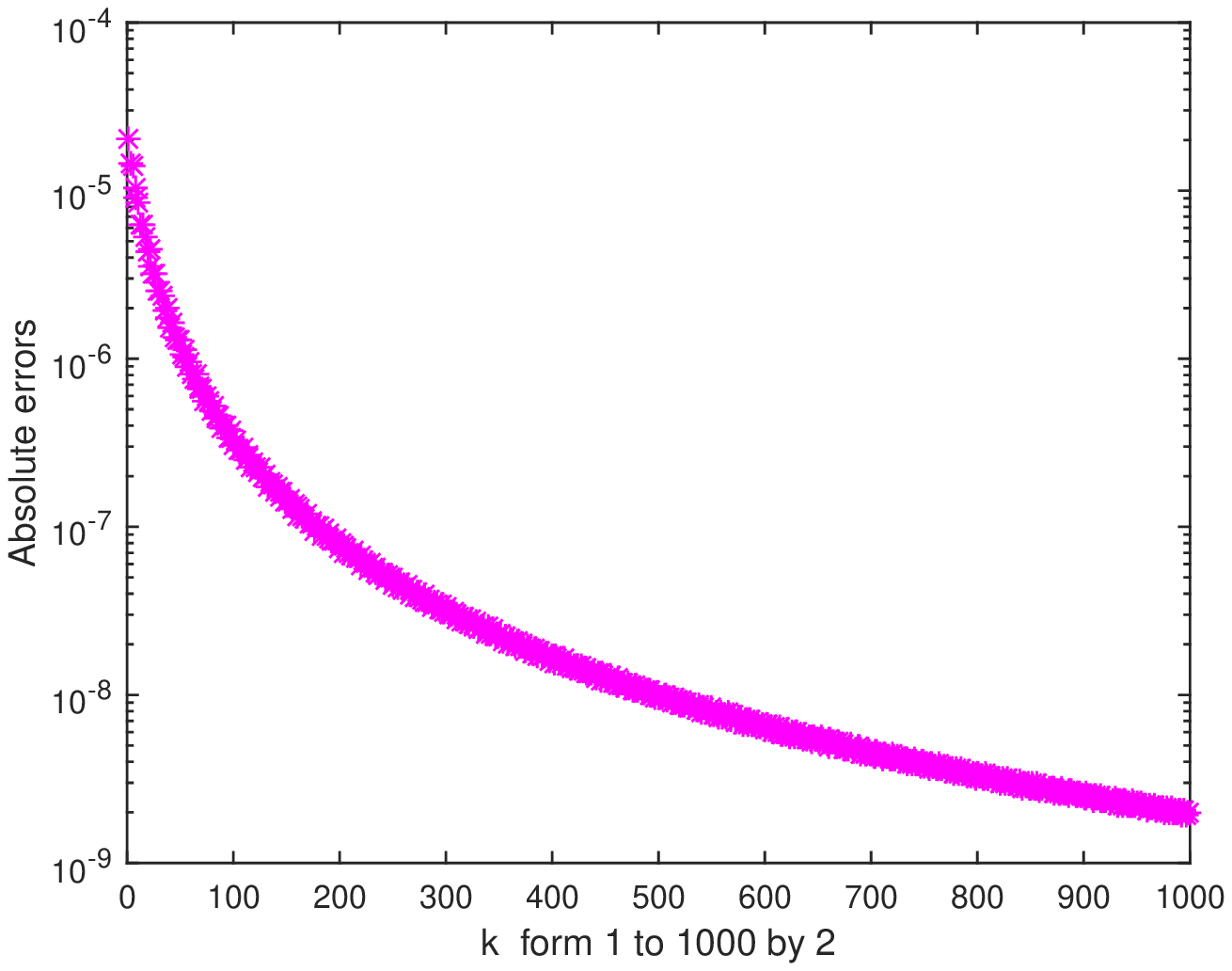}
\includegraphics[width=0.46\textwidth, height=0.18\textheight]{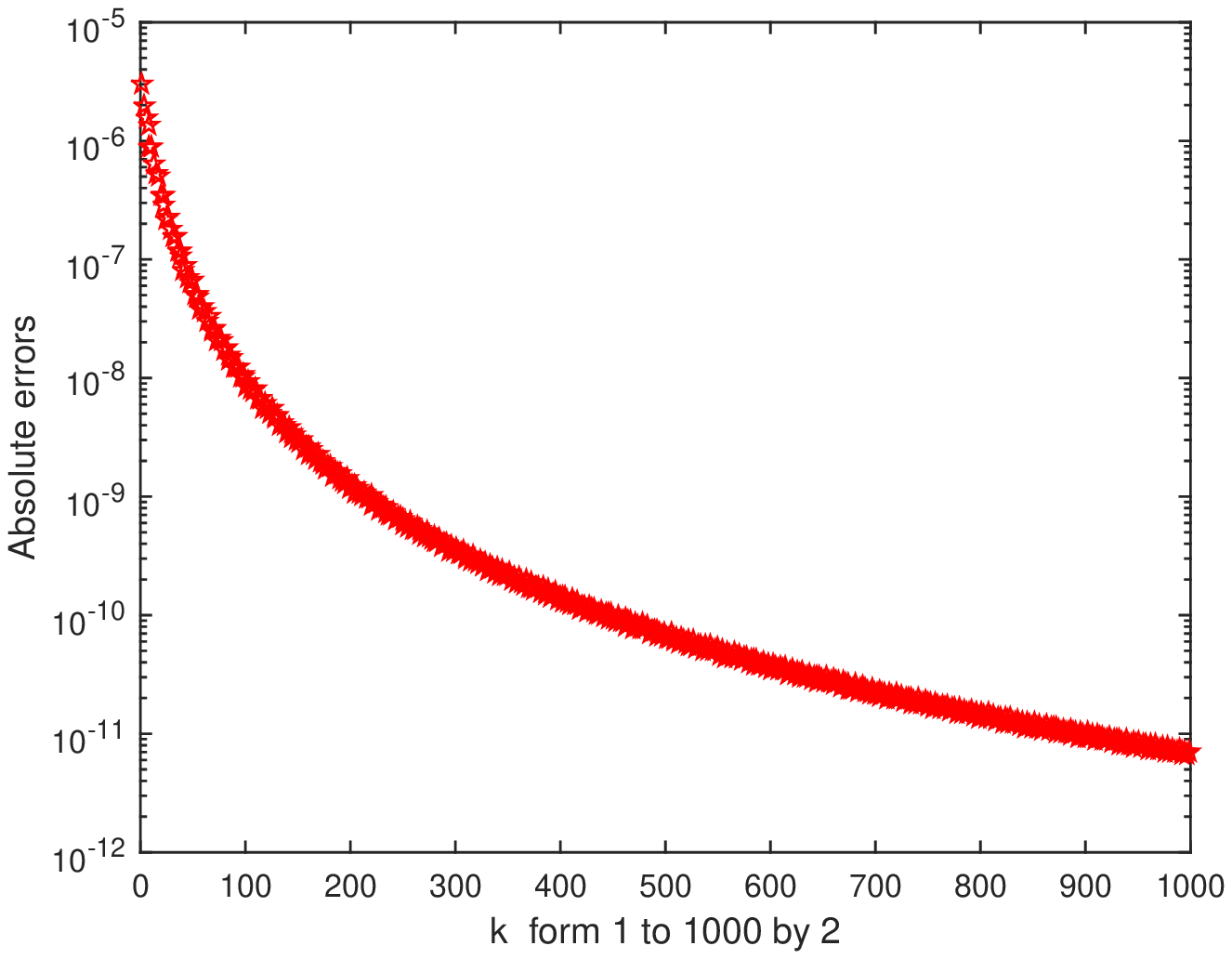}
\caption{Absolute errors for the Clenshaw--Curtis--Filon--type method for the integral \eqref{eq:d2} with $s=1$ (left) and $s=2$ (right), when
$N=6$, $\omega=50$, $k$  from $1$ to $1000$ by 2.}\label{fig:6}
\end{figure}
\begin{figure}[hbtp]
 \center
\includegraphics[width=0.46\textwidth, height=0.18\textheight]{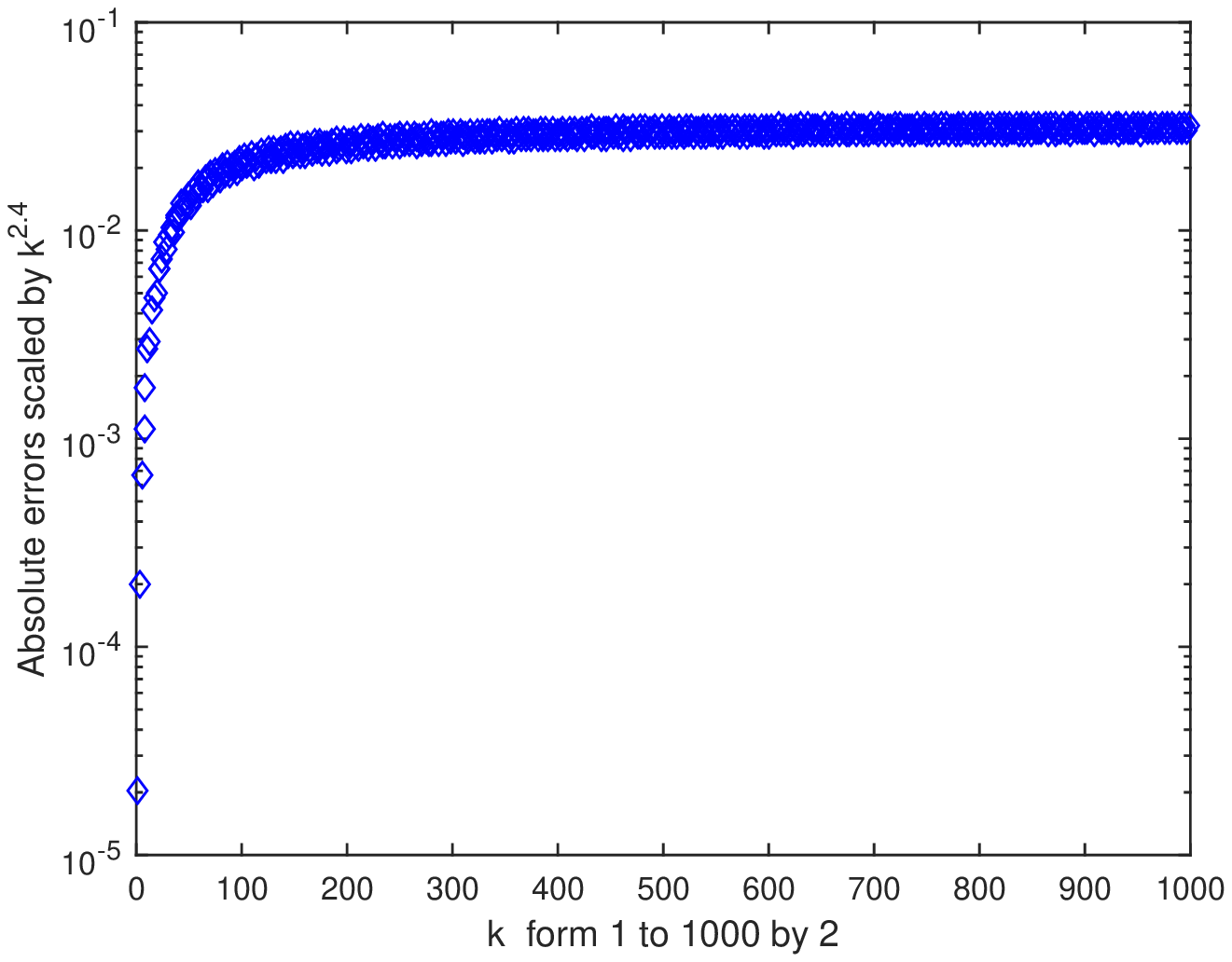}
\includegraphics[width=0.46\textwidth, height=0.18\textheight]{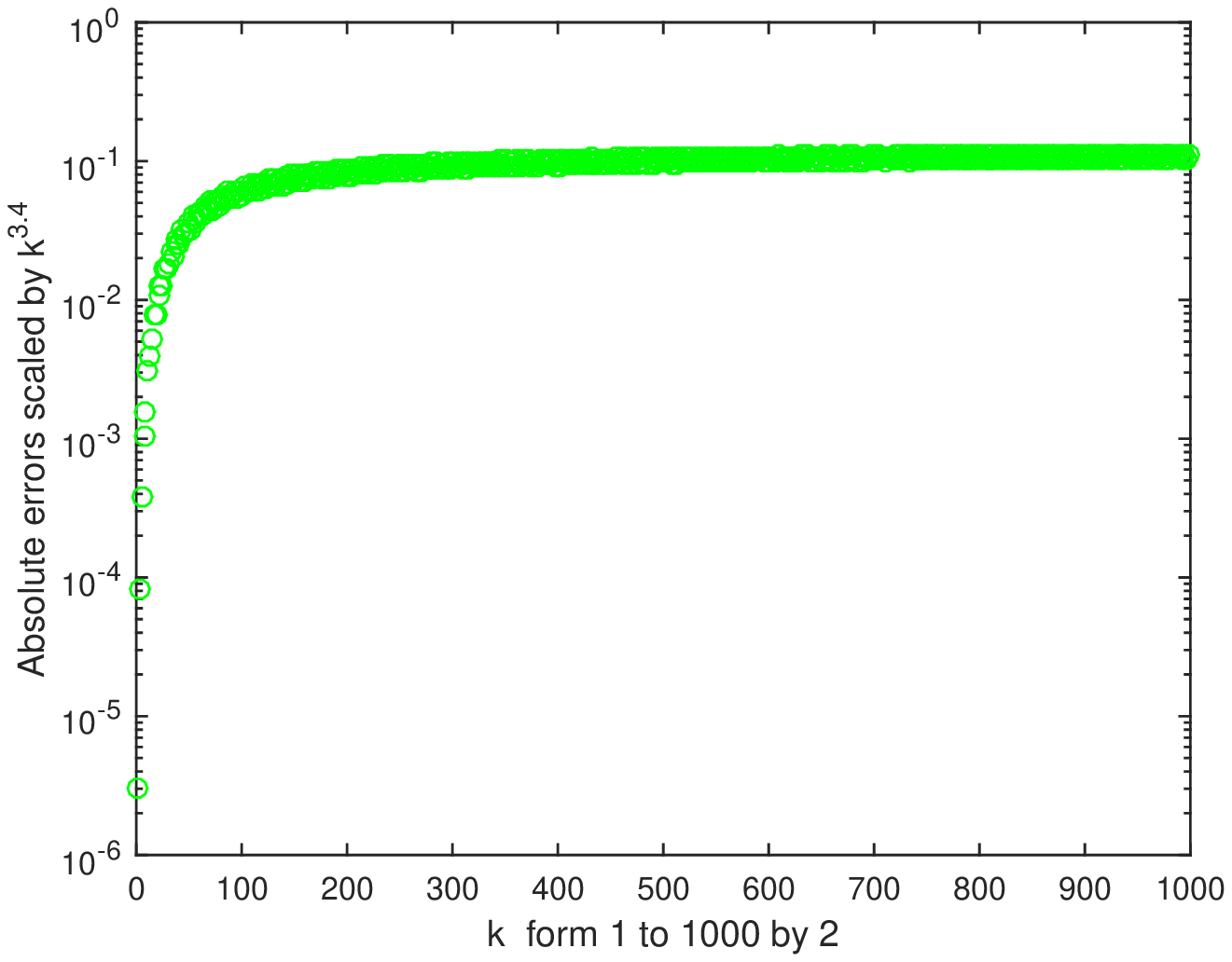}
\caption{Absolute errors scaled by $k^{2.4}$ with $s=1$ (left) and $k^{3.4}$ with $s=2$ (right) for the Clenshaw--Curtis--Filon--type method for the integral \eqref{eq:d2}
when $N=6$, $\omega=50$, $k$  from $1$ to $1000$ by 2.}\label{fig:7}
\end{figure}

\begin{figure}[hbtp]
 \center
\includegraphics[width=0.46\textwidth, height=0.18\textheight]{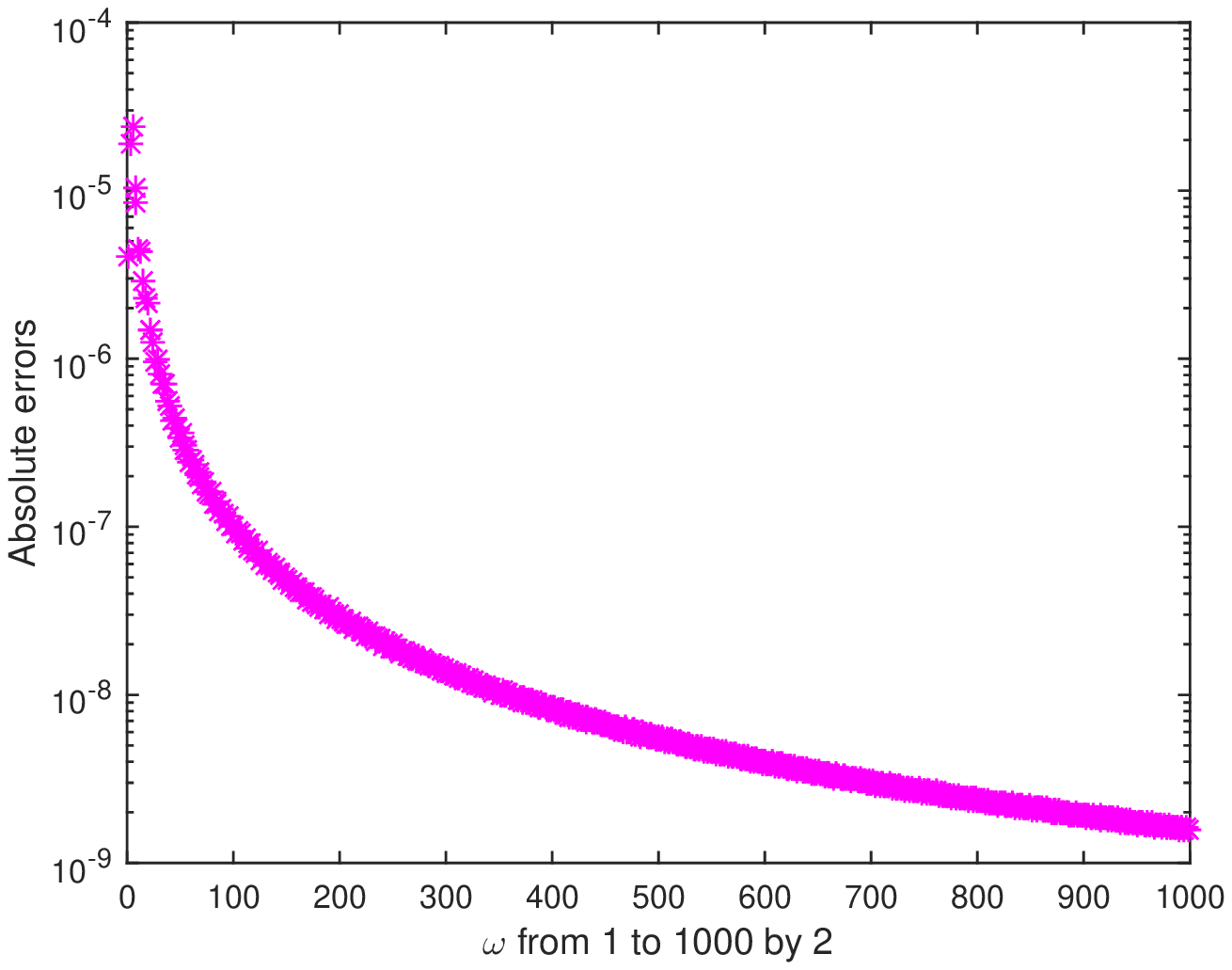}
\includegraphics[width=0.46\textwidth, height=0.18\textheight]{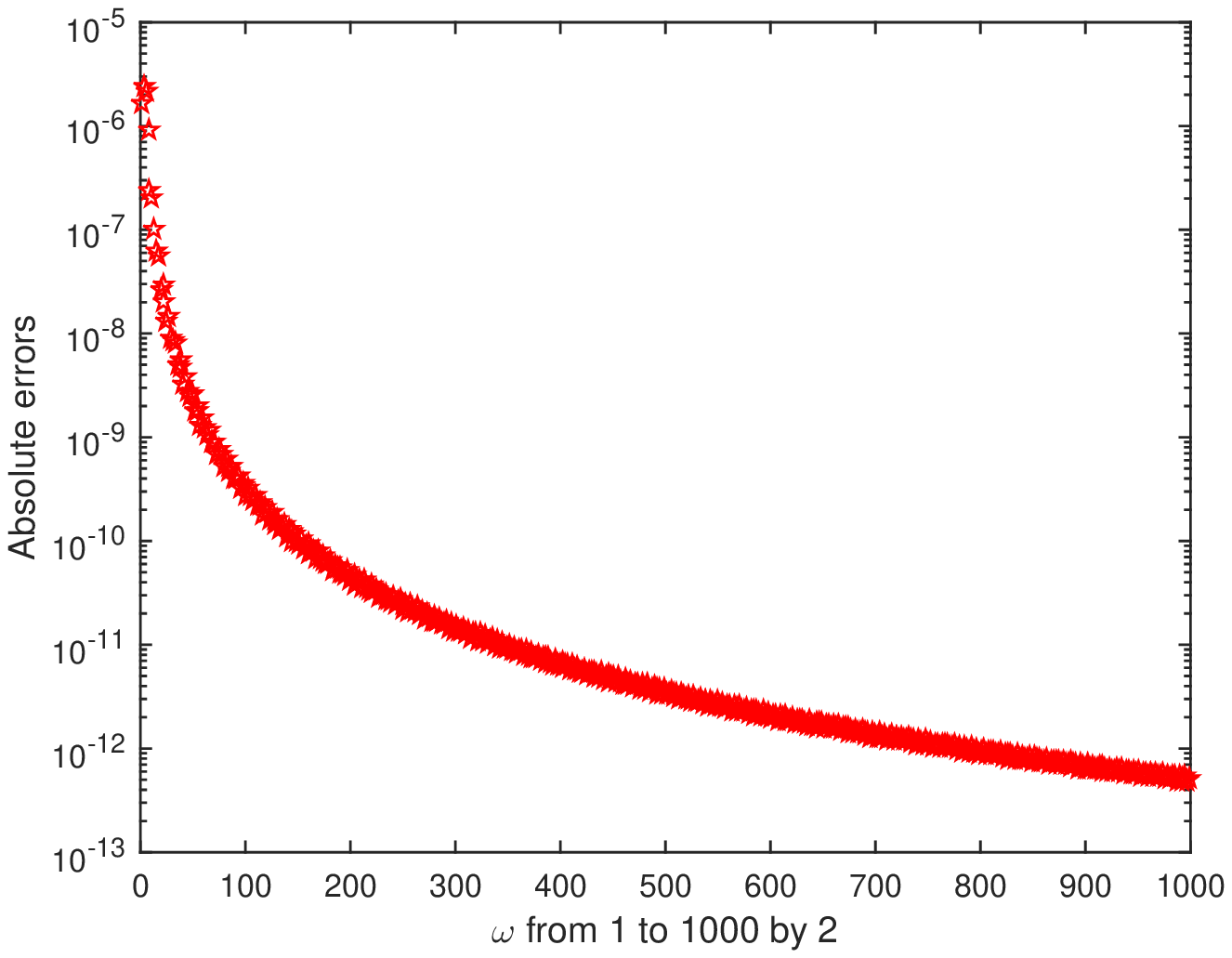}
\caption{Absolute errors for the Clenshaw--Curtis--Filon--type method for the integral \eqref{eq:d3} with $s=0$ (left) and $s=1$ (right), when
$N=4$, $\omega$  from $1$ to $1000$ by 2.}\label{fig:8}
\end{figure}
\begin{figure}[hbtp]
 \center
\includegraphics[width=0.46\textwidth, height=0.18\textheight]{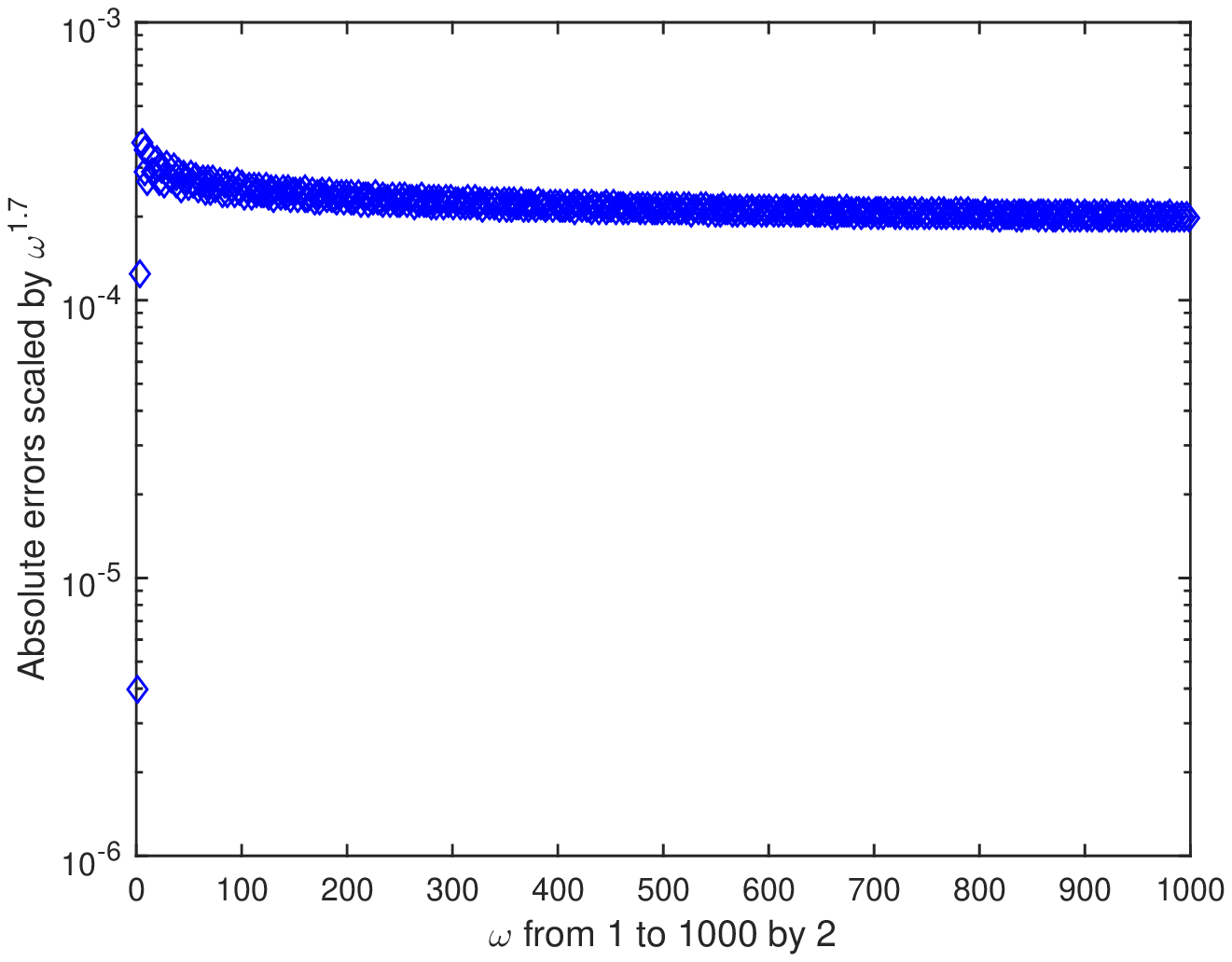}
\includegraphics[width=0.46\textwidth, height=0.18\textheight]{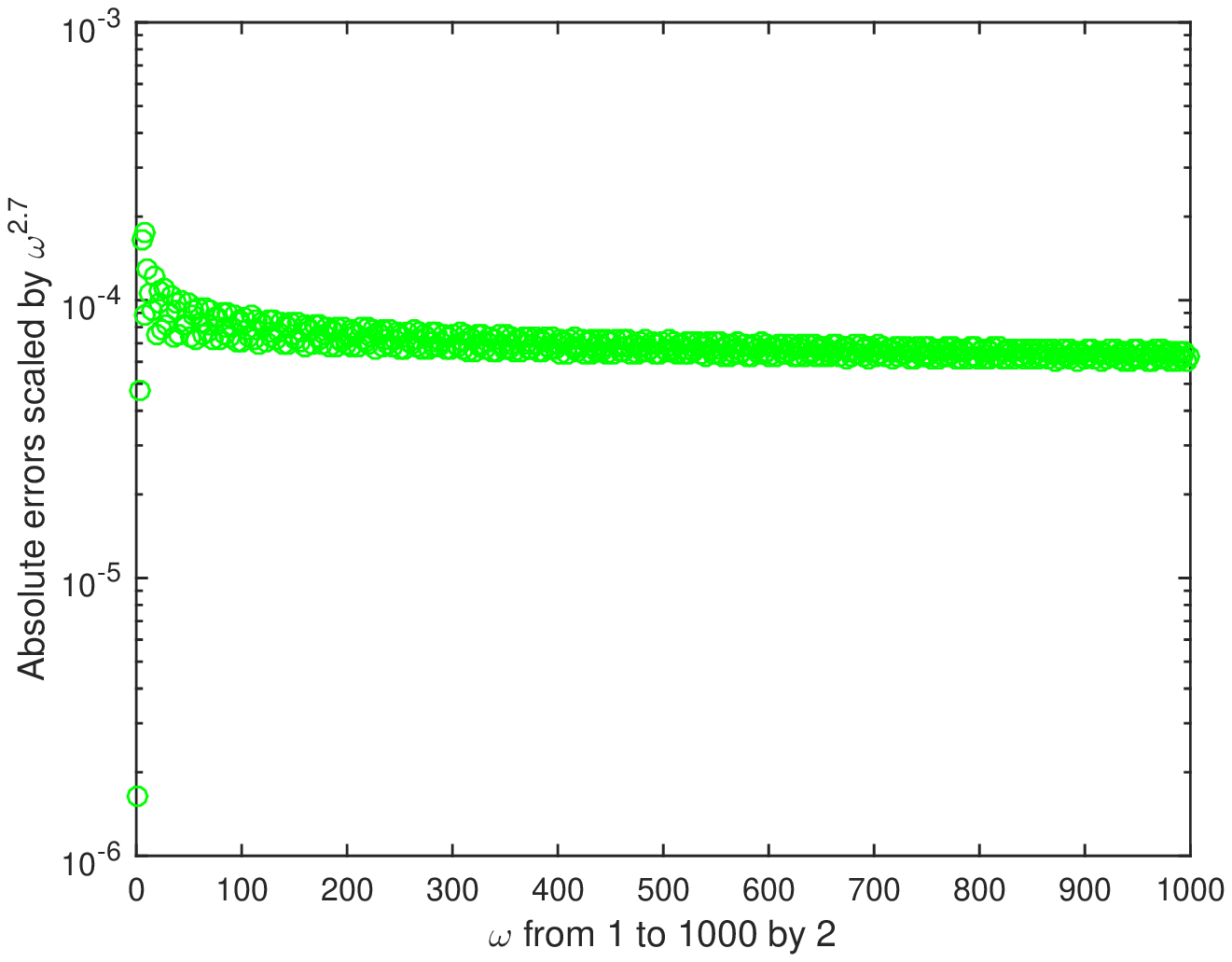}
\caption{Absolute errors scaled by $\omega^{1.7}$ with $s=0$ (left) and $\omega^{2.7}$ with $s=1$ (right) for the Clenshaw--Curtis--Filon--type method for the integral \eqref{eq:d3}
when $N=4$, $\omega$  from $1$ to $1000$ by 2.}\label{fig:9}
\end{figure}

\begin{table}[!htb]
\centering \caption{ Relative errors  for the integral (\ref{eq:d1})
by the  Clenshaw--Curtis--Filon--type method with $k=10$, $N=2,4,6$ and $s=0,1,2$.}\label{tb1}
\begin{tabular}{ccccc}
\toprule
$s$ & $N$ & $\omega=10$ &  $\omega=20$ &$\omega=50$\\
\midrule
$0$ & $2$ & $ 1.78\times 10^{-4}$ &$1.35\times 10^{-4} $&$7.60\times 10^{-5} $\\
$$ & $4$ & $ 1.35\times 10^{-6}$ &$8.93\times 10^{-7} $&$5.22\times 10^{-7} $\\
$$ & $6$ & $ 3.34\times 10^{-9}$ &$1.97\times 10^{-9} $&$1.20\times 10^{-9} $\\
\noalign{\smallskip}
$1$ & $2$ & $ 3.94\times 10^{-7}$ &$ 1.96\times 10^{-7}$&$5.32\times 10^{-8} $\\
$$ & $4$ & $ 1.04\times 10^{-9}$ &$ 6.75\times 10^{-10}$&$1.71\times 10^{-10} $\\
$$ & $6$ & $ 1.72\times 10^{-12} $&$9.28\times 10^{-13} $&$2.49\times 10^{-13} $\\
\noalign{\smallskip}
$2$ & $2$ &  $6.56\times 10^{-10}$ &$ 2.20\times 10^{-10}$&$4.47\times 10^{-11} $\\
$$ & $4$ & $  1.48\times 10^{-12}$ &$ 3.74\times 10^{-13}$&$7.76\times 10^{-14} $\\
$$ & $6$ & $ 1.89\times 10^{-15}$ &$6.79\times 10^{-16} $&$1.26\times 10^{-16}$\\
\noalign{\smallskip}
 \multirow{2}{*}{Real Values}  &$$& $ 0.841824877078759$ &$0.708386698058846 $&$0.517419675175559 $\\
&   $$         & $-1.172097304662626\,\I$ &$-0.956797421788702\,\I$&$-0.711685588704216\,\I$\\
\bottomrule
\end{tabular}
\end{table}

\begin{table}[!htb]
\centering \caption{Relative errors  for the integral (\ref{eq:d2})
by the  Clenshaw--Curtis--Filon--type method with $\omega=10$, $N=8, 16, 24$ and $s=0,1,2$.}\label{tb2}
\begin{tabular}{ccccc}
\toprule
$s$ & $N$ & $k=80$ &  $k=160$ &$k=320$\\
\midrule
$0$ & $8$ & $ 4.36\times 10^{-4}$ &$2.19\times 10^{-4} $&$1.11\times 10^{-4} $\\
$$ & $16$ & $ 1.51\times 10^{-6}$ &$8.45\times 10^{-7} $&$4.13\times 10^{-7} $\\
$$ & $24$ & $ 3.11\times 10^{-9}$ &$1.12\times 10^{-9} $&$3.53\times 10^{-10} $\\
\noalign{\smallskip}
$1$ & $8$ & $ 4.80\times 10^{-6}$ &$ 8.59\times 10^{-7}$&$2.62\times 10^{-7} $\\
$$ & $16$ & $ 7.80\times 10^{-8}$ &$ 1.48\times 10^{-8}$&$3.37\times 10^{-9} $\\
$$ & $24$ & $ 7.96\times 10^{-10} $&$1.28\times 10^{-10} $&$2.61\times 10^{-11} $\\
\noalign{\smallskip}
$2$ & $8$ &  $ 7.96\times 10^{-7}$ &$ 8.89\times 10^{-8}$&$1.19\times 10^{-8} $\\
$$ & $16$ & $  8.10\times 10^{-9}$ &$ 7.17\times 10^{-10}$&$7.77\times 10^{-11} $\\
$$ & $24$ & $  2.95\times 10^{-11}$ &$1.81\times 10^{-12} $&$1.37\times 10^{-13}$\\
\noalign{\smallskip}
 \multirow{2}{*}{Real Values}  &$$& $0.030083151162300$ &$ 0.023581342870858$&$0.017909179561849$\\
&   $$         & $-0.042241981991079\,\I$ &$-0.031875514971454\,\I$&$-0.024353985798652\,\I$\\
\bottomrule
\end{tabular}
\end{table}

\begin{table}[!htb]
\centering \caption{Relative errors  for the integral (\ref{eq:d3})
by the  Clenshaw--Curtis--Filon--type method with  $N=3, 6, 9$ and $s=0,1,2$.}\label{tb3}
\begin{tabular}{ccccc}
\toprule
$s$ & $N$ & $\omega=25$ &  $\omega=50$ &$\omega=100$\\
\midrule
$0$ & $3$ & $ 2.26\times 10^{-5}$ &$9.40\times 10^{-6} $&$4.04\times 10^{-6} $\\
$$ &  $6$ & $ 1.33\times 10^{-6}$ &$5.97\times 10^{-7} $&$2.75\times 10^{-7} $\\
$$ &  $9$ & $ 2.59\times 10^{-9}$ &$1.29\times 10^{-9} $&$6.98\times 10^{-10} $\\
\noalign{\smallskip}
$1$ & $3$ & $2.03\times 10^{-6}$ &$4.66\times 10^{-7}$&$1.11\times 10^{-7} $\\
$$ &  $6$ & $5.78\times 10^{-10}$ &$1.60\times 10^{-10}$&$2.41\times 10^{-11} $\\
$$ &  $9$ & $4.42\times 10^{-11}$&$1.32\times 10^{-11} $&$2.82\times 10^{-12} $\\
\noalign{\smallskip}
$2$ & $3$ &  $ 1.25\times 10^{-8}$ &$ 1.98\times 10^{-9}$&$2.74\times 10^{-10} $\\
$$ &  $6$ & $  1.86\times 10^{-10}$ &$ 2.26\times 10^{-11}$&$2.37\times 10^{-12} $\\
$$ &  $9$ & $  2.11\times 10^{-13}$ &$7.48\times 10^{-15} $&$2.98\times 10^{-15}$\\
\noalign{\smallskip}
 \multirow{2}{*}{Real Values}  &$$& $0.030229145167903$ &$0.017639904837672$&$0.010310330002264$\\
&   $$                      & $-0.034246416918332\,\I$ &$-0.019163197919570\,\I$&$-0.010688289764988\,\I$\\
\bottomrule
\end{tabular}
\end{table}

\textbf{Example 4.1.}
Let us consider the computation of the integral
\begin{eqnarray}\label{eq:d1}
\int_0^1 x^\alpha (1-x)^\beta \cos(x) \E^{\I2kx} H_\nu^{(1)}(\omega x)\D x
\end{eqnarray}
by the Clenshaw--Curtis--Filon--type method \eqref{eq:b3}, where $\nu=0, \alpha=-0.6,\beta=-0.3$.
The absolute errors and scaled absolute errors are displayed in Figs. \ref{fig:4}--\ref{fig:5}, respectively.
Also, the relative errors are displayed in Table \ref{tb1}.

\textbf{Example 4.2.}
Let us consider the computation of the integral
\begin{eqnarray}\label{eq:d2}
\int_0^1 x^\alpha (1-x)^\beta \frac{1}{1+16x^2} \E^{\I 2kx} H_\nu^{(1)}(\omega x)\D x
\end{eqnarray}
by the Clenshaw--Curtis--Filon--type method \eqref{eq:b3}, where $\nu=0.6, \alpha=0,\beta=-0.3$ (Figs. \ref{fig:6}--\ref{fig:7}, Table \ref{tb2}).

\textbf{Example 4.3.}
Finally, we consider the computation of the integral of a special form
\begin{eqnarray}\label{eq:d3}
\int_0^1 x^\alpha (1-x)^\beta \frac{1}{1+(1+x)^2} \E^{\I \omega x} H_\nu^{(1)}(\omega x)\D x
\end{eqnarray}
by the Clenshaw--Curtis--Filon--type method \eqref{eq:b3}, where $\nu=0.3$, and
$\alpha=-0.2,\beta=-0.3$. Figs. \ref{fig:6}--\ref{fig:7} show  error bound on $\omega$ for the Clenshaw--Curtis--Filon--type method for this case.
Table \ref{tb3} displays the  relative errors for the proposed method with $N=3,6,9$ and $s=0,1,2$.

Form Figs. \ref{fig:5}, \ref{fig:7}, \ref{fig:9}, we can see that the error bounds given in Theorem \ref{th:3} for the
Clenshaw--Curtis--Filon--type method are attainable. Figs. \ref{fig:4}, \ref{fig:6}, \ref{fig:8} and Tables \ref{tb1}--\ref{tb3}
show that the presented method is very efficient for the approximation of the
integral \eqref{eq:a1}.  Moreover, for the well-behaved function $f(x)$, the integral \eqref{eq:a1} can be
efficiently approximated by Clenshaw--Curtis--Filon--type method with a small number of interpolation points.
In addition, the improvement of the accuracy for the  integral \eqref{eq:a1} can be obtained
by using interpolation with derivatives of higher order at two endpoints, or adding the number of the
interpolation points.

\section{\textbf{Concluding remarks}}\label{sec:5}
In this paper, we consider a Clenshaw--Curtis--Filon--type method for
the computation of the integral \eqref{eq:a1} with $(N+1)$  Clenshaw--Curtis points, which can be efficiently implemented
in $O(N\log N)$ operations. Moreover, we present a universal  method  for the derivation of the recurrence relation for the modified moments,
which can be applied to the modified moments with other type kernels. Based on this  recurrence relation,
the modified moments can be efficiently computed by using the special functions or the existing method with small number of points.
Finally, an error bound on $k$ and $\omega$ and several numerical experiments are given
to show the accuracy and efficiency for the proposed method.


%



%

\end{document}